\newtheorem{lemma}{Lemma}
\newtheorem{theorem}{Theorem}[section]
\newtheorem{remark}{Remark}
\newtheorem{proposition}{Proposition}
\newtheorem{assumption}{Assumption}
\newtheorem{definition}{Definition}
\begin{document}

\begin{frontmatter}

\title{On Construction, Properties and Simulation of Haar-Based Multifractional Processes}

\author[label1]{Antoine Ayache}
\ead{antoine.ayache@univ-lille.fr}
\author[label2]{{Andriy Olenko}\texorpdfstring{\corref{cor1}}{*}}
\ead{A.Olenko@latrobe.edu.au}
\author[label2]{Nemini Samarakoon}
\ead{n.wijesinghesamarakoon@latrobe.edu.au}
\cortext[cor1]{Corresponding author}
\affiliation[label1]{
            organization={University of Lille},
            addressline={CNRS, UMR 8524 - Laboratoire Paul Painlevé},
            city={Lille},
            postcode={F-59000},
            country={France}}

\affiliation[label2]{
            organization={La Trobe University},
            city={Melbourne},
            country={Australia}}

\begin{abstract}
Multifractional processes extend the concept of fractional Brownian motion by replacing the constant Hurst parameter with a time-varying Hurst function. This extension allows for modulation of the roughness of sample paths over time. The paper introduces a new class of multifractional processes, the Gaussian Haar-based multifractional processes (GHBMP), which is based on the Haar wavelet series representations. The resulting processes cover a significantly broader set of Hurst functions compared to the existing literature, enhancing their suitability for both practical applications and theoretical studies. The theoretical properties of these processes are investigated. Simulation studies conducted for various Hurst functions validate the proposed model and demonstrate its applicability, even for Hurst functions exhibiting discontinuous behaviour.
\end{abstract}



\begin{keyword}

Multifractional process \sep Hurst parameter \sep Haar basis \sep Random series \sep Estimation of stochastic processes \sep Simulation

\MSC 60G22 \sep 60G15 \sep 62M99 \sep 65C20

\end{keyword}

\end{frontmatter}

\section{Introduction}
\label{sec1}

Fractional Brownian motion (fBm) $\{B_{H}(t),\,t \geq 0\}$ was introduced by  \cite{mandelbrot1968fractional}. fBm is a centered, self-similar Gaussian process with stationary dependent increments and it can be defined as follows:
\begin{eqnarray}\label{eq01}
    B_{H}(t) = \dfrac{1}{\tau (H + \frac{1}{2})}&\left\{ \int_{-\infty}^{0} \left[(t-s)^{H-\frac{1}{2}}-(-s)^{H-\frac{1}{2}}\right] dB(s)\right.\nonumber\\ &+\left.\int_{0}^{t} (t-s)^{H-\frac{1}{2}} dB(s) \right \}.
\end{eqnarray}
It has a constant Hurst parameter $H\in (0,1),$  which regulates the roughness of the process.
One of the most frequently used stochastic processes, the standard Brownian motion $\{B(t),\, t \geq 0\},$ is a particular case of fBm with the value of the Hurst parameter equals to ${1}/{2}$. When $H \in (0,{1}/{2}),$ the increments of fBm are negatively correlated and hence it exhibits the short-range dependence (short memory) behaviour. If $H \in ({1}/{2},1)$ the increments are positively correlated and fBm has long-range dependence (long memory). Due to dependent increments, fBm is a more realistic model compared to the standard Brownian motion. Moreover, fBm could model more natural phenomena due to its long memory. fBm has found numerous applications in finance, queuing networks, signal processing, and image processing, see, for example, \cite{BIERME2020293, BROADBRIDGE_2022, decreusefond1998fractional, doukhan2002theory, nualart2006fractional, PIPIRAS200549,  rostek2013note, yancong2011image}, just to mention a few.

However, fBm is unable to model changes in the roughness of trajectories over time. To address the limitations of fBm, several models of multifractional processes $\{B_{H(t)}(t),$ $t \geq 0\},$ have been introduced. This development began with the pioneering works of \cite{benassi1997elliptic}  and \cite{peltier1995multifractional}. The concept of multifractional process proved to be very useful in many applications (see, for example, about finance in \cite{bianchi2011modeling, bianchi2014multifractional,
bianchi2014asset, frezza2018fractal}). The theoretical construction of multifractional processes by Peltier and Véhel was based on the replacement of the constant Hurst parameter in the stochastic integral in (\ref{eq01}) with a deterministic Hurst function $H(t) \in (0,1)$ and was named Classical Multifractional Brownian Motion. The article \cite{benassi1997elliptic} proposed the method to construct multifractional processes through a self-similar Gaussian process focusing on the harmonizable representation of the fBm.

However, further generalizations were uncertain in these cases as introducing a stochastic Hurst parameter can only be achieved independently of the Brownian motion used. Due to these constraints utilizing a random wavelet series was suggested in constructing multifractional processes, where the Hurst function is random.
The first construction of Multifractional Processes with Random Exponent (MPRE) by substituting the Hurst parameter with a stochastic process was suggested by \rm{\cite{ayache2005multifractional}}.  The article \rm{\cite{stoev2006rich}} derived a generalization of the methods introduced in \rm{\cite{benassi1997elliptic}}  and \rm{\cite{peltier1995multifractional}} as a harmonizable representation with stochastic integrals. Later in 2007, the Generalized Multifractional Processes with Random Exponent were introduced, where the Hurst parameter was replaced by a sequence of continuous stochastic processes, see \rm{\cite{ayache2007wavelet}}. Further details regarding the connection of wavelet methods to multifractional processes can be found in \rm{\cite{ayache2018multifractional}}. A general approach in~\cite{lebovits2014stochastic} extends the definitions in \cite{ benassi1997elliptic, peltier1995multifractional, stoev2006rich} while preserving the fundamental nature of this class of Gaussian processes.

Moving Average Multifractional Process with Random Exponent (MAMPRE) was introduced using the It\^o integral with a random Hurst function which depends on the integration variable in~\rm{\cite{ayache2018new}}. MAMPRE replaced the Hurst parameter in~(\ref{eq01}) by the stochastic process. Apart from the It\^o integral representation the MAMPRE processes have advantages in simulations over the MPRE ones, see~\rm{\cite{ayache2018new}}.

Statistical estimation of the Hurst function of multifractional processes is essential for understanding the roughness of these processes in various applications. In this paper, we employ one of such estimates to validate the proposed construction through simulation studies. The majority of known approaches utilize generalized quadratic or power variations methods to build the estimators (\rm{\cite{ayache2022uniformly, ayache2017uniformly, ayache2004identification,bardet2013nonparametric, benassi2000identification, benassi1998identifying, benassi1998identification,  ISTAS1997407,lebovits2017estimation,peng2018general}}). Other more or less related methods include the IR-estimator, which is based on the increment ratio statistic \rm{\cite{bardet2013nonparametric}}, an estimator based on wavelets \rm{\cite{stolojescu2012comparison}}, the rescaled range analysis (R/S) method~\rm{\cite{tzouras2015financial}}, and detrended fluctuation analysis (DFA)~\rm{\cite{tzouras2015financial}}.

It should be noted that multifractional processes do not have stationary increments. There are several methods for constructing them, resulting in different processes even when their H\"older regularities are identical. One of the main challenges is to propose a construction that can provide a feasible representation for theoretical studies and simulations for wide classes of functions $H(t).$ For example, for Hurst functions that can abruptly change over time. In \cite{ayache2018new}, a new approach to multifractional models with random exponents based on the Haar wavelets was suggested.

The main aim of this paper is to utilize the Haar wavelet approach in constructing a new class of multifractional processes. The Haar wavelets were introduced by A.Haar in~1910 and are a system of piecewise constant square wave functions.
Among the wavelet methods, which utilize an analytic expression, the Haar wavelet approach is the simplest and the fastest one as it is based on a system of two piecewise constant functions. Also, numerous standard algorithms exist for the Haar wavelet computations which can achieve the required precision with a minimum number of grid points, see~\rm{\cite{lepik2014haar}}. Due to the above reasons, the Haar wavelet approach is used in this paper to construct a new class of multifractional processes, which will be referred to as Gaussian Haar-based multifractional processes (GHBMP).

It is worth noting that the proposed representation is straightforward to implement in program code and practical to use. However, rigorously verifying that a specific stochastic series possesses the required properties remains a highly challenging mathematical problem, as evidenced by the publications cited above. This paper extends this line of research by developing a methodology to establish results regarding the properties of random series with what is likely the most general class of H\"older exponents studied to date.

The paper is organized as follows. Section~\ref{sec2} introduces the considered class of multifractional processes. The next section proves the correctness of their definition and some basic properties. Sections~\ref{sec3} and~\ref{sec4} investigate pointwise H\"older exponents of these multifractional processes. Section~\ref{sec5} provides the simulation studies performed for different Hurst functions. To confirm the theoretical results, in this section the Hurst functions were estimated from the simulated multifractional process and compared with the theoretical functions. The last section discusses the obtained results and identifies some problems for future research.

\section{Preliminaries}\label{sec2}

This section introduces the main definitions and the class of multifractional processes that will be studied in this paper. Some properties of these processes are investigated.

$\mathbb{Z}_{+}$ denotes the set of all non-negative integer numbers.  $C^\infty$ is the space of all infinitely differentiable functions. The notation~$L_2$ will be used for the space of square-integrable functions and the corresponding norm. The standard deviation of the random variable $X$ will be denoted by $\sigma(X).$ We also use the convention that the sum over an empty set of indices equals zero.

In the following, $C$ and $a$ with subindices represent generic finite positive constants. The values of such constants are not necessarily the same in each appearance and may change depending on the expression.

All stochastic processes considered in this article are defined on the same probability space denoted by $(\Omega, \cal{F}, {\mathbb P}).$ For a process $\{X(t),$ $t\in [0,1]\}$ with continuous sample paths, the pointwise H\"older exponent defined below is used to quantify the roughness of the process in a neighbourhood of any chosen fixed time point.

\begin{definition}\label{def1}
   The H\"older exponent of the stochastic process $X(\cdot)$ at point $t\in[0,1]$ is defined~by
\[\alpha_X(t) = \sup \left\{ \alpha \in \mathbb{R}_+ : \limsup_{h \to 0} \frac{|X(t + h) - X(t)|}{|h|^{\alpha}} = 0 \right\}.
\]
\end{definition}
For each fixed $t\in[0,1]$ the H\"older exponent $\alpha_X(t)$ is a random variable.

Let $\{H_{j}:j \in \mathbb{Z}_{+}\}$ be a sequence of Lipchitz functions defined on $[0,1]$ with values in a fixed compact interval $[\underline{H},\overline{H}] \subset (0,1).$

\begin{definition} For each $j \in
\mathbb{Z}_{+},$ we define the norm of $H_{j}$ by
\begin{equation} \label{eq1}
    \nu_{j} \coloneqq \left\Vert H_{j} \right\Vert_{Lipz} \coloneqq \sup_{x \in [0,1]} |H_{j}(x)| + \sup_{0 \leq x^\prime < x^{\prime\prime}} \dfrac{|H_{j}(x^{\prime\prime})-H_{j}(x^\prime)|}{x^{\prime\prime}-x^\prime}.
    \end{equation}
    \end{definition}
\begin{assumption}
For the sequence of Lipchitz functions $\{H_{j}:j \in \mathbb{Z}_{+}\}$  there exists a constant  $C>0$ such that, for all $j \in \mathbb{Z}_{+},$ it holds
\begin{equation} \label{eq2}
    \nu_{j} \leq C(1+j).
\end{equation}
\end{assumption}

\begin{definition}
The Haar mother wavelet $h$ is defined via the indicator function $\mathbbm{1}_A(\cdot)$ of a set $A,$  as
    \begin{equation} \label{eq3}
        h(s) :=  {\mathbbm{1}}_{[0,1/2
        )}(s) -{\mathbbm{1}}_{[1/2,1       )}(s), \quad s \in \mathbb{R}.
    \end{equation}
For all $j \in \mathbb{Z}_{+},$  $k \in \{0, \dots , 2^{j}-1\}$ and $s \in \mathbb{R}$ their dyadic translations and dilations are given by
\begin{equation} \label{eq4}
    h_{j,k} (s) \coloneqq 2^{j/2} h(2^{j}s-k)
    = 2^{j/2} \left({\mathbbm{1}}_{[k/2^j,(k+1/2)/2^j)}(s) -{\mathbbm{1}}_{[(k+1/2)/2^j,(k+1)/2^j   )}(s)\right).
\end{equation}
\end{definition}

Let $ \{ \varepsilon_{j,k}:j \in \mathbb{Z}_{+}$ and $k \in \{ 0, \dots , 2^{j}-1\} \}$ be a sequence of independent $\mathcal{N}$(0,1) Gaussian random variables.

To model the multifractal behaviour this paper introduces and investigates the following class of processes.

\begin{definition}\label{def_pro} The process defined on the interval $[0,1]$ as
\begin{equation}\label{eq6}
    X(t) \coloneqq \sum_{j=0}^{+\infty}  \sum_{k=0}^{2^{j}-1}
     \left(\int_{0}^{1} (t-s)_{+}^{H_{j}(k/{2^j})-{1}/{2}} h_{j,k}(s)ds \right)\varepsilon_{j,k},
\end{equation}
 will be called the Gaussian Haar-based multifractional process. The abbreviation GHBMP will be used to denote processes from this class.
\end{definition}

 Note that the GHBMP processes defined by (\rm {\ref{eq6}}) are centered Gaussian, but do not have stationary increments.

\section{On GHBMP and its H\"older continuity}

This section studies GHBMP and its basic properties. It also provides several key characteristics of Haar-based transforms that will be used in subsequent analyses.

The first result shows that the series in Definition~{\rm \ref{def_pro}} is $L_2$-convergent and GHBMP is correctly defined.

\begin{theorem}\label{theorem1}
    For all fixed $t \in [0,1]$ it holds
    \begin{equation*}
       {\mathbb E}X^2(t)=  \sum_{j=0}^{+\infty}  \sum_{k=0}^{2^{j}-1}
     \left|\int_{0}^{1} (t-s)_{+}^{H_{j}(k/{2^j})-{1}/{2}} h_{j,k}(s)ds\right|^2 < +\infty.
    \end{equation*}
 Therefore, the GHBMP processes are properly defined in $L_2$-sense.
\end{theorem}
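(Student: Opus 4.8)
The plan is to strip away the randomness first, reducing the statement to a deterministic summability estimate, and then to control the Haar-type coefficients
\[
c_{j,k}(t) \coloneqq \int_{0}^{1} (t-s)_{+}^{H_{j}(k/2^{j})-1/2}\, h_{j,k}(s)\,ds
\]
uniformly over $j$ and $k$. Since the $\varepsilon_{j,k}$ are independent $\mathcal N(0,1)$, the partial sums $X_{N}(t)=\sum_{j=0}^{N}\sum_{k=0}^{2^{j}-1} c_{j,k}(t)\varepsilon_{j,k}$ satisfy $\mathbb{E}\big[(X_{N}(t)-X_{M}(t))^{2}\big]=\sum_{j=M+1}^{N}\sum_{k=0}^{2^{j}-1} c_{j,k}(t)^{2}$ for $N>M$, so $\{X_{N}(t)\}$ is Cauchy in $L_{2}(\Omega)$ if and only if $\sum_{j,k} c_{j,k}(t)^{2}<+\infty$; in that case the $L_{2}$-limit $X(t)$ exists and, by continuity of the $L_{2}$-norm, $\mathbb{E}X^{2}(t)=\sum_{j,k} c_{j,k}(t)^{2}$. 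Hence the whole theorem reduces to proving that this deterministic double series is finite.

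To estimate $c_{j,k}(t)$ I would write $\alpha_{j,k}\coloneqq H_{j}(k/2^{j})-1/2$, observing that $\alpha_{j,k}\in[\underline H-1/2,\overline H-1/2]\subset(-1/2,1/2)$, and recall that $h_{j,k}$ is supported on $I_{j,k}\coloneqq[k/2^{j},(k+1)/2^{j})$, has length $2^{-j}$, vanishing integral, and $\int|h_{j,k}|=2^{-j/2}$. The index set then splits into three regimes according to the position of $I_{j,k}$ relative to $t$. When $t\le k/2^{j}$ the support lies to the right of $t$, so $(t-s)_{+}^{\alpha_{j,k}}\equiv0$ there and $c_{j,k}(t)=0$. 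For the at most two intervals that contain $t$ or are immediately to its left, I would use the crude bound $|c_{j,k}(t)|\le 2^{j/2}\int_{I_{j,k}}(t-s)_{+}^{\alpha_{j,k}}\,ds$; evaluating the elementary integral of $u^{\alpha_{j,k}}$ and using $\alpha_{j,k}+1\ge\underline H+1/2>0$ gives $|c_{j,k}(t)|\le C\,2^{-j(\alpha_{j,k}+1/2)}=C\,2^{-jH_{j}(k/2^{j})}\le C\,2^{-j\underline H}$.

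The remaining, and decisive, regime consists of the intervals lying strictly to the left of $t$. Here I would exploit the single vanishing moment of the Haar wavelet by subtracting from $g(s)=(t-s)^{\alpha_{j,k}}$ its value at a point $s_{0}\in I_{j,k}$, which is legitimate since $\int h_{j,k}=0$; then $|c_{j,k}(t)|\le\|g'\|_{\infty,I_{j,k}}\,2^{-3j/2}$, with $\|g'\|_{\infty,I_{j,k}}\le|\alpha_{j,k}|\,\delta_{j,k}^{\alpha_{j,k}-1}$, where $\delta_{j,k}=t-(k+1)/2^{j}>0$ is the gap between $I_{j,k}$ and $t$ (the bound on $g'$ follows because $x\mapsto x^{\alpha_{j,k}-1}$ is decreasing). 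Indexing these intervals by their distance $m=1,2,\dots$ from the one containing $t$, so that $\delta_{j,k}\ge m\,2^{-j}$, this yields
\[
c_{j,k}(t)^{2}\le C\,m^{2\alpha_{j,k}-2}\,2^{-j(2\alpha_{j,k}+1)} .
\]

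The decisive point is that the constraints $0<\underline H\le\overline H<1$ are exactly what make both relevant series converge. On one hand $2\alpha_{j,k}-2=2H_{j}(k/2^{j})-3\le 2\overline H-3<-1$, so $\sum_{m\ge1} m^{2\alpha_{j,k}-2}\le\sum_{m\ge1} m^{2\overline H-3}<+\infty$ uniformly in $(j,k)$; on the other hand $2\alpha_{j,k}+1=2H_{j}(k/2^{j})\ge 2\underline H$. Combining the three regimes, each level obeys $\sum_{k=0}^{2^{j}-1} c_{j,k}(t)^{2}\le C\,2^{-2j\underline H}$ (the two near intervals covered by the crude bound, the far ones by the vanishing-moment bound summed over $m$), and summing this geometric series over $j\ge0$ gives $\sum_{j,k} c_{j,k}(t)^{2}<+\infty$ since $\underline H>0$. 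The main obstacle is the far regime: one must control the blow-up of $g'(s)=-\alpha_{j,k}(t-s)^{\alpha_{j,k}-1}$ as $s\to t$, carefully isolating the one or two intervals adjacent to $t$ (where $\delta_{j,k}$ is too small and the crude estimate must be used instead) from the genuinely far ones, and then recognizing that it is precisely $\overline H<1$ that guarantees summability of $m^{2\alpha_{j,k}-2}$ while $\underline H>0$ guarantees summability over $j$. Note that neither the Lipschitz property of the $H_{j}$ nor Assumption~1 is needed here, as each coefficient involves only a single, constant exponent.
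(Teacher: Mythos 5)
Your proof is correct and takes essentially the same route as the paper: the paper's Lemmas~1--2 rewrite the coefficient as $2^{-jH_{j,k}}h^{[H_{j,k}]}(2^jt-k)$ and establish the decay $|h^{[\lambda]}(x)|\le c\,(3+|x|)^{\lambda-3/2}$ by the same mean-value/cancellation argument you apply directly at scale $j$, so your far-interval bound $c_{j,k}(t)^{2}\le C\,m^{2H_{j,k}-3}\,2^{-2jH_{j,k}}$ is precisely the rescaled form of the paper's estimate. The concluding summation --- in $k$ using $\overline{H}<1$ and then geometrically in $j$ using $\underline{H}>0$ --- matches the paper's final display verbatim.
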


The proof of Theorem~{\rm \ref{theorem1}} relies on the following result.

\begin{lemma} \label{lemma1}
    For any fixed value $\lambda \in [\underline{H},\overline{H}],$ let $h^{[\lambda]}(\cdot)$ be the function defined on $\mathbb{R}$ as
    \begin{align} \label{eq8}
        h^{[\lambda]} (x) &: =  \int_{\mathbb{R}} (x-s)_{+}^{\lambda-\frac{1}{2}} h(s) ds \nonumber\\
        & =  \left(\lambda + \frac{1}{2}\right)^{-1}
        \left( (x)_+^{\lambda +\frac{1}{2}} -2\left(x-\frac{1}{2}\right)_+^{\lambda +\frac{1}{2}} + (x-1)_+^{\lambda +\frac{1}{2}} \right).
     \end{align}

     Then, there is a constant $c>0$ such that for all $(\lambda,x) \in [\underline{H},\overline{H}] \times \mathbb{R},$

     \begin{equation} \label{eq9}
         \left|h^{[\lambda]} (x)\right| \leq c\left(3+|x|\right)^{\lambda-\frac{3}{2}}.
     \end{equation}
\end{lemma}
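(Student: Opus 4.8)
The plan is to first confirm the closed form in~(\ref{eq8}) and then bound $|h^{[\lambda]}|$ by splitting $\mathbb{R}$ into three regions according to the position of $x$ relative to the support $[0,1)$ of $h$. The closed form is obtained by noting that $h$ is supported on $[0,1)$, so that $h^{[\lambda]}(x)=\int_0^{1/2}(x-s)_+^{\lambda-1/2}\,ds-\int_{1/2}^1(x-s)_+^{\lambda-1/2}\,ds$, and integrating each piece via the primitive $s\mapsto -(\lambda+\tfrac12)^{-1}(x-s)_+^{\lambda+1/2}$; collecting the boundary contributions at $0,\tfrac12,1$ gives exactly the three-term expression in~(\ref{eq8}). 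Throughout I would use that $\lambda\in[\underline H,\overline H]\subset(0,1)$ forces $\lambda+\tfrac12\in(\tfrac12,\tfrac32)$, $|\lambda-\tfrac12|<\tfrac12$ and, crucially, $\lambda-\tfrac32<-\tfrac12<0$, so that $u\mapsto u^{\lambda-3/2}$ is decreasing on $(0,\infty)$ and $(3+|x|)^{\lambda-3/2}$ stays bounded away from $0$ on any bounded range of $x$, uniformly in $\lambda$.

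For $x\le 0$ the integrand vanishes, so $h^{[\lambda]}(x)=0$ and~(\ref{eq9}) is trivial. For $0<x\le 2$ I would argue by uniform boundedness: the prefactor $(\lambda+\tfrac12)^{-1}$ is at most $(\underline H+\tfrac12)^{-1}$ and each of $(x)_+^{\lambda+1/2}$, $(x-\tfrac12)_+^{\lambda+1/2}$, $(x-1)_+^{\lambda+1/2}$ is bounded by $2^{\overline H+1/2}$, so $|h^{[\lambda]}(x)|\le C_0$ for a constant $C_0$ independent of $(\lambda,x)$. Since the right-hand side of~(\ref{eq9}) is bounded below by $5^{\underline H-3/2}>0$ on $\{|x|\le 2\}$, the inequality~(\ref{eq9}) follows on this region after enlarging the constant $c$.

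The decisive region is $x>2$, where the gain of an extra order of decay (from the naive $x^{\lambda-1/2}$ down to $x^{\lambda-3/2}$) comes from the cancellation encoded in the vanishing zeroth moment $\int h=0$. Concretely, for $x>2$ all three positive parts in~(\ref{eq8}) are active, so $(\lambda+\tfrac12)\,h^{[\lambda]}(x)=g(x)-2g(x-\tfrac12)+g(x-1)$ with $g(u)=u^{\lambda+1/2}$, which is precisely the central second difference of $g$ at $x-\tfrac12$ with step $\tfrac12$. Applying the mean-value form of the second difference gives $g(x)-2g(x-\tfrac12)+g(x-1)=\tfrac14 g''(\xi)$ for some $\xi\in(x-1,x)$, and since $g''(u)=(\lambda+\tfrac12)(\lambda-\tfrac12)u^{\lambda-3/2}$ this yields $|h^{[\lambda]}(x)|\le\tfrac18\,\xi^{\lambda-3/2}\le\tfrac18\,(x-1)^{\lambda-3/2}$, using $\xi>x-1>0$ and monotonicity. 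Finally I would convert $(x-1)^{\lambda-3/2}$ into $(3+x)^{\lambda-3/2}$ by writing $(x-1)^{\lambda-3/2}=\big(\tfrac{3+x}{x-1}\big)^{3/2-\lambda}(3+x)^{\lambda-3/2}$ and bounding $\tfrac{3+x}{x-1}\le 5$ and $\tfrac32-\lambda<\tfrac32$ for $x\ge 2$, so that $(x-1)^{\lambda-3/2}\le 5^{3/2}(3+x)^{\lambda-3/2}$, which closes the estimate.

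The main obstacle is that the one clean identity exhibiting the decay directly, namely the integration-by-parts representation $h^{[\lambda]}(x)=(\lambda-\tfrac12)\int_0^1(x-s)_+^{\lambda-3/2}\psi(s)\,ds$ with $\psi(s)=\int_{-\infty}^s h$ the tent primitive, is only legitimate when $x\notin(0,1)$, because for $x\in(0,1)$ the kernel $(x-s)_+^{\lambda-3/2}$ has a non-integrable singularity at $s=x$. This is exactly why the near region must be handled by crude boundedness rather than by the cancellation argument, and why I would split at $x=2$ rather than at $x=1$: keeping the second-difference region bounded away from $x=1$ is what makes the comparison $\tfrac{3+x}{x-1}\le 5$, and hence the passage to $(3+|x|)^{\lambda-3/2}$, uniform in $\lambda\in[\underline H,\overline H]$.
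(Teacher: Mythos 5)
Your proof is correct and takes essentially the same route as the paper's: vanishing for $x\le 0$, a crude uniform bound on a bounded region (you split at $x=2$, the paper at $x=4$) where the right-hand side of the estimate is bounded below, and a cancellation argument for large $x$ followed by the same ratio comparison $(x-1)^{\lambda-3/2}\le C\,(3+x)^{\lambda-3/2}$. The only cosmetic difference is how the extra order of decay is harvested: you apply the second-difference mean value theorem to $g(u)=u^{\lambda+1/2}$ in the closed-form expression, whereas the paper applies monotonicity and the mean value theorem to the kernel difference in the integral representation; both give the same $O\bigl((x-1)^{\lambda-3/2}\bigr)$ bound.
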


For any $(y,\beta) \in \mathbb{R}^2,$ we use the next notation
\begin{equation*}
    (y)_+ ^ \beta : =
\begin{cases}
y^ \beta ,& \mbox{when}\ y\ge 0,\\
0, & \text{else.}
\end{cases}
\end{equation*}

\begin{remark}
    For each $\beta \in (-1,+\infty),$ and for all real numbers $a<b,$ it holds
    \begin{equation*}
        \int_{a}^{b} (y)_{+}^{\beta} dy = (\beta +1)^{-1} \left((b)_{+}^{\beta +1}-(a)_{+}^{\beta+1}\right).
    \end{equation*}
\end{remark}

\begin{proof}[Proof of Lemma~{\rm \ref{lemma1}}]
It follows from ({\rm \ref{eq3}}) that
\begin{align}\label{hl}
    h^{[\lambda]} (x) &= \int_{0}^{\frac{1}{2}} \left( \left(x-s\right)^{\lambda - \frac{1}{2}}_+ - \left(x-\frac{1}{2}-s\right)^{\lambda - \frac{1}{2}}_+ \right) ds \\
    & = \left(\lambda + \frac{1}{2}\right)^{-1} \left( (x)_+^{\lambda + \frac{1}{2}} -2\left(x-\frac{1}{2}\right)^{\lambda + \frac{1}{2}}_+ + \left(x-1\right)^{\lambda + \frac{1}{2}}_+\right).\nonumber
\end{align}

Therefore,
       \begin{equation} \label{eq10}
h^{[\lambda]} (x) =0, \ \mbox{if}\ x \in (-\infty,0],
     \end{equation}
and  (\ref{eq9}) holds on the interval $(-\infty,0].$

Note that for each finite interval, there exists a constant $c,$ such that (\ref{eq9}) is valid. For example, it follows from (\ref{eq8}) that
    \begin{equation} \label{eq11}
        |h^{[\lambda]} (x)| \leq  4 \left(\lambda + \frac{1}{2}\right)^{-1} 4^{\lambda + \frac{1}{2}}\leq 8\cdot 4^\frac{3}{2} \leq c (3+4)^{-\frac{3}{2}}\leq c\left(3+|x|\right)^{\lambda-\frac{3}{2}},
    \end{equation}
    when $x \in [0,4],$

For the case of $x \in [4, +\infty),$ by (\ref{hl}), monotonicity of the integrands and the mean value theorem, one obtains that
    \begin{align*}
        \left|h^{[\lambda]} (x)\right| &\leq \frac{1}{2} \left| x^{\lambda - \frac{1}{2}} - \left(x-1\right)^{\lambda - \frac{1}{2}}\right|  = \frac{1}{2}\left|\lambda - \frac{1}{2}\right| (x-a)^{\lambda-\frac{3}{2}}\leq  \frac{1}{4} (x-1)^{\lambda-\frac{3}{2}},
    \end{align*}
    where $a\in (0,1).$

    Then, as  $\lambda \in [\underline{H},\overline{H}] \subset (0,1),$ it follows that
    \begin{align} \label{eq13}
        \left|h^{[\lambda]} (x)\right| &\leq \frac{1}{4} \left(x-1\right)^{\lambda - \frac{3}{2}} = \frac{1}{4} \left(\dfrac{x-1}{x+3}\right)^{\lambda - \frac{3}{2}}  \left(3+x\right)^{\lambda - \frac{3}{2}} \nonumber\\
        &\leq  \frac{1}{4} \left(1+\dfrac{4}{x-1}\right)^\frac{3}{2} \left(3+x\right)^{\lambda - \frac{3}{2}}
        \leq \frac{1}{4} \left(1+\frac{4}{3}\right)^\frac{3}{2} \left(3+x\right)^{\lambda - \frac{3}{2}}.
    \end{align}

    Finally, by combining ({\rm \ref{eq10}}), ({\rm \ref{eq11}}) and ({\rm \ref{eq13}}), one obtains the statement (\ref{eq9}) of the lemma.
\end{proof}

\begin{remark}\label{remark1}
    For the sake of simplicity, we will denote $H_{j,k}: = H_j\left(\frac{k}{2^j}\right).$
\end{remark}

\begin{lemma} \label{lemma2}
    For all $j \in \mathbb{Z}_{+},$ $k \in \{0, \dots, 2^{j}-1\}$ and $t \in [0,1],$ it holds that
    \begin{equation*}
        \int_{0}^{1} (t-s)_{+}^{H_{j,k}-\frac{1}{2}} h_{j,k} (s) ds = 2^{-j H_{j,k}} h^{[H_{j,k}]} (2^jt-k).
    \end{equation*}
\end{lemma}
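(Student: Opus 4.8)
The plan is to establish the claimed identity
\[
\int_{0}^{1} (t-s)_{+}^{H_{j,k}-\frac{1}{2}} h_{j,k}(s)\,ds = 2^{-j H_{j,k}} h^{[H_{j,k}]}(2^j t - k)
\]
by a direct change of variables that converts the dilated/translated Haar wavelet $h_{j,k}$ back to the mother wavelet $h$ appearing in the definition \eqref{eq8} of $h^{[\lambda]}$. The key observation is that $h^{[\lambda]}(x)$ is, up to support considerations, exactly the integral $\int_{\mathbb{R}} (x-s)_+^{\lambda - 1/2} h(s)\,ds$, so the task reduces to rewriting the left-hand integral in that template form.

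First I would substitute the explicit expression \eqref{eq4}, namely $h_{j,k}(s) = 2^{j/2} h(2^j s - k)$, into the integral and perform the change of variable $u = 2^j s - k$, so that $s = (u+k)/2^j$ and $ds = 2^{-j}\,du$. Under this substitution the factor $(t-s)^{H_{j,k}-1/2}_+$ becomes $\bigl(t - (u+k)/2^j\bigr)_+^{H_{j,k}-1/2} = 2^{-j(H_{j,k}-1/2)}\bigl(2^j t - k - u\bigr)_+^{H_{j,k}-1/2}$, using the homogeneity $(c\,y)_+^\beta = c^\beta (y)_+^\beta$ for $c = 2^{-j} > 0$. Collecting the prefactors $2^{j/2}$ from the wavelet, $2^{-j}$ from $ds$, and $2^{-j(H_{j,k}-1/2)}$ from the power term gives the single exponent $2^{j/2 - j - j H_{j,k} + j/2} = 2^{-j H_{j,k}}$, which matches the claimed normalising constant.

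Next I would handle the limits of integration. The original integral runs over $s \in [0,1]$, but since the mother wavelet $h$ is supported on $[0,1)$, the function $h(2^j s - k)$ is supported where $2^j s - k \in [0,1)$, i.e.\ $s \in [k/2^j, (k+1)/2^j) \subset [0,1]$ for the admissible range $k \in \{0,\dots,2^j - 1\}$. Hence the wavelet already vanishes outside a subinterval of $[0,1]$, so restricting to $[0,1]$ imposes no loss, and after the change of variable the $u$-integral may be taken over all of $\mathbb{R}$ (the integrand vanishing off the support of $h$). This yields
\[
\int_{0}^{1} (t-s)_{+}^{H_{j,k}-\frac{1}{2}} h_{j,k}(s)\,ds = 2^{-j H_{j,k}} \int_{\mathbb{R}} (2^j t - k - u)_+^{H_{j,k}-\frac{1}{2}} h(u)\,du = 2^{-j H_{j,k}} h^{[H_{j,k}]}(2^j t - k),
\]
where the last equality is precisely the definition \eqref{eq8} of $h^{[\lambda]}$ evaluated at $x = 2^j t - k$ with $\lambda = H_{j,k}$.

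I expect no serious obstacle here; this is essentially a bookkeeping computation. The only point requiring mild care is the careful tracking of the three powers of $2^j$ and verifying they combine to the stated exponent $-j H_{j,k}$, together with confirming that the homogeneity relation for $(\cdot)_+^\beta$ is applied with a positive scaling constant so that the truncation at zero is preserved. The support argument guaranteeing that extending the domain of integration from $[0,1]$ to $\mathbb{R}$ is harmless is routine given the explicit form of $h$.
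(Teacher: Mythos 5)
Your proof is correct and follows essentially the same route as the paper: substitute the explicit form of $h_{j,k}$, extend the integration domain to $\mathbb{R}$ using the support of $h$, change variables $u = 2^j s - k$, and pull out the homogeneity factor to recognise the definition of $h^{[H_{j,k}]}(2^j t - k)$. In fact you supply slightly more detail (the exponent bookkeeping and the support argument) than the paper's own three-line computation.
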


\begin{proof}[Proof of Lemma~{\rm \ref{lemma2}}]
    Using (\ref{eq4}) and the change of variable $s' = 2^{j}s -k,$ one gets the next relation
    \begin{align}\label{eq14}
        \int_{0}^{1} (t-s)_{+}^{H_{j,k}-\frac{1}{2}} h_{j,k}(s) ds &= \int_{\mathbb{R}} (t-s)_{+}^{H_{j,k}-\frac{1}{2}} 2^{\frac{j}{2}} h(2^{j}s -k) ds \\
        & = 2^{-\frac{j}{2}} \int_{\mathbb{R}} \left(t-(2^{-j}k+2^{-j}s')\right)_{+}^{H_{j,k}-\frac{1}{2}} h(s')ds'\nonumber \\
        & = 2^{-jH_{j,k}} h^{[H_{j,k}]} (2^{j}t-k), \nonumber
    \end{align}
where the last equality follows from (\ref{eq8}).
\end{proof}

\begin{proof}[Proof of Theorem~{\rm \ref{theorem1}}]
    By Remark~\ref{remark1}, Lemmas~\ref{lemma1} and~\ref{lemma2}, and  the fact that the $H_{j,k}$ belong to $[\underline{H},\overline{H}],$  one has
    \begin{align*}
        &\sum_{j=0}^{+\infty} \sum_{k=0}^{2^{j}-1} \left|\int_{0}^{1} (t-s)_{+}^{H_{j,k}-\frac{1}{2}} h_{j,k}(s) ds\right|^2
        \leq \sum_{j=0}^{+\infty} 2^{-2j\underline{H}} \sum_{k=0}^{2^{j}-1} \left|h^{[H_{j,k}]}(2^jt-k)\right|^2 \\
        &\quad \leq \sum_{j=0}^{+\infty} 2^{-2j\underline{H}} \sum_{k=0}^{2^{j}-1} C_1\left(3+|2^jt-k|\right)^{2\overline{H}-3}  \leq \dfrac{C_1C_2}{1-2^{-2\underline{H}}} < +\infty,
    \end{align*}
    where $C_2 \coloneqq \sum_{p=-\infty}^{+\infty} \left(2+|p|\right)^{2\overline{H} -3} < \infty.$
\end{proof}
The following result is a direct consequence of the representation {\rm(\ref{eq6})} and  Lemma~\ref{lemma2}.
\begin{proposition}
  The covariance function of the GHBMP process defined by~{\rm(\ref{eq6})}   equals
  \[{\rm Cov}(X(t),X(t'))=\sum_{j=0}^{+\infty}  \sum_{k=0}^{2^{j}-1} 2^{-2jH_{j,k}} h^{[H_{j,k}]} (2^{j}t-k) h^{[H_{j,k}]} (2^{j}t'-k).\]
\end{proposition}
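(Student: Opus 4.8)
The plan is to derive the covariance directly from the series representation~(\ref{eq6}) by exploiting the $L_2$-convergence already established in Theorem~\ref{theorem1} together with the orthonormality of the Gaussian coefficients. First I would apply Lemma~\ref{lemma2} to put the coefficient of each $\varepsilon_{j,k}$ into closed form. Writing
\[
a_{j,k}(t) := \int_{0}^{1} (t-s)_{+}^{H_{j,k}-\frac{1}{2}} h_{j,k}(s)\,ds = 2^{-jH_{j,k}}\,h^{[H_{j,k}]}\!\left(2^{j}t-k\right),
\]
the process becomes $X(t) = \sum_{j=0}^{+\infty}\sum_{k=0}^{2^{j}-1} a_{j,k}(t)\,\varepsilon_{j,k}$, so that the target formula is exactly $\sum_{j,k} a_{j,k}(t)a_{j,k}(t')$.

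Since every GHBMP is centered, I have $\mathrm{Cov}(X(t),X(t')) = \mathbb{E}\big[X(t)X(t')\big]$. I would then introduce the partial sums $S_{N}(t) := \sum_{j=0}^{N}\sum_{k=0}^{2^{j}-1} a_{j,k}(t)\,\varepsilon_{j,k}$. By Theorem~\ref{theorem1} the defining series is $L_2$-convergent for each fixed time, so $S_{N}(t)\to X(t)$ and $S_{N}(t')\to X(t')$ in $L_2(\Omega)$. Because the $L_2$ inner product is continuous as a bilinear form, I can pass the limit through it:
\[
\mathbb{E}\big[X(t)X(t')\big] = \lim_{N\to\infty}\mathbb{E}\big[S_{N}(t)S_{N}(t')\big].
\]
For each finite $N$ the expectation on the right is a finite sum, so no convergence issue arises when expanding the product; using $\mathbb{E}[\varepsilon_{j,k}\varepsilon_{j',k'}]=\delta_{jj'}\delta_{kk'}$ (independence and unit variance) collapses the doubly indexed sum onto the diagonal,
\[
\mathbb{E}\big[S_{N}(t)S_{N}(t')\big] = \sum_{j=0}^{N}\sum_{k=0}^{2^{j}-1} a_{j,k}(t)a_{j,k}(t') = \sum_{j=0}^{N}\sum_{k=0}^{2^{j}-1} 2^{-2jH_{j,k}}\,h^{[H_{j,k}]}(2^{j}t-k)\,h^{[H_{j,k}]}(2^{j}t'-k),
\]
and letting $N\to\infty$ yields the claimed expression.

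The only point requiring care — and it is mild here — is confirming that the limiting double series is well defined, i.e.\ absolutely convergent, rather than merely the limit of its symmetric partial sums. This follows from the Cauchy--Schwarz inequality, since $\sum_{j,k}|a_{j,k}(t)a_{j,k}(t')|\le \big(\sum_{j,k}|a_{j,k}(t)|^{2}\big)^{1/2}\big(\sum_{j,k}|a_{j,k}(t')|^{2}\big)^{1/2} = \big(\mathbb{E}X^{2}(t)\big)^{1/2}\big(\mathbb{E}X^{2}(t')\big)^{1/2}$, and both factors are finite by Theorem~\ref{theorem1}. Thus the heart of the argument is simply the transfer of the orthonormality relation through the continuous inner product, with the finiteness already furnished by the preceding theorem; no new estimates on $h^{[\lambda]}$ are needed beyond those used there.
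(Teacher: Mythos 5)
Your proposal is correct and follows exactly the route the paper intends: the paper states this proposition as a direct consequence of the representation~(\ref{eq6}) and Lemma~\ref{lemma2}, which is precisely the argument you spell out (closed-form coefficients via Lemma~\ref{lemma2}, orthonormality of the $\varepsilon_{j,k}$, and $L_2$-convergence from Theorem~\ref{theorem1} to pass to the limit). Your added Cauchy--Schwarz justification of absolute convergence is a careful, fully compatible elaboration of what the paper leaves implicit.
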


\begin{theorem}\label{prop5}
    For all $t',t'' \in [0,1]$ it holds
    \begin{equation*}
        {\mathbb E}(|X(t'')-X(t')|^2)\leq c|t''-t'|^{2\underline{H}}.
    \end{equation*}
    Therefore, the paths of $X$ are H\"older continuous functions of any order strictly lower than $\underline{H}.$
\end{theorem}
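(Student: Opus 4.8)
The plan is to exploit the series representation together with Lemma~\ref{lemma2} and the independence of the $\varepsilon_{j,k}$. Writing $\delta := |t''-t'|$ (which we may assume lies in $(0,1]$, the case $\delta=0$ being trivial), the orthogonality of independent standard Gaussian variables gives
\[
{\mathbb E}\bigl(|X(t'')-X(t')|^2\bigr)=\sum_{j=0}^{+\infty}\sum_{k=0}^{2^j-1}2^{-2jH_{j,k}}\bigl|h^{[H_{j,k}]}(2^jt''-k)-h^{[H_{j,k}]}(2^jt'-k)\bigr|^2 .
\]
I would then fix the integer $N:=\lfloor\log_2(1/\delta)\rfloor\ge 0$, so that $2^{-(N+1)}<\delta\le 2^{-N}$, and split the sum over $j$ into the low-frequency block $0\le j\le N$ (where $2^j\delta\le 1$) and the high-frequency block $j>N$. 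Throughout, the coefficient is controlled by $2^{-2jH_{j,k}}\le 2^{-2j\underline H}$.

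The technical heart of the argument, and the step I expect to be the main obstacle, is an increment estimate for $h^{[\lambda]}$ that is uniform in $\lambda\in[\underline H,\overline H]$ and carries spatial decay: there is $C>0$ such that, with $\beta:=\min(\underline H+\tfrac12,1)$,
\[
\bigl|h^{[\lambda]}(x+\eta)-h^{[\lambda]}(x)\bigr|\le C\,|\eta|^{\beta}\,(3+|x|)^{\overline H-\frac32}\qquad\text{for all }x\in\mathbb R,\ |\eta|\le 1 .
\]
To prove it I would use the explicit formula~(\ref{eq8}), writing $h^{[\lambda]}$ as a second difference of $(\cdot)_+^{\lambda+1/2}$, and distinguish two regions. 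On a bounded region around the singular points $0,\tfrac12,1$ I would bound each increment of $(\cdot)_+^{\lambda+1/2}$ either by subadditivity (when $\lambda+\tfrac12\le 1$, giving the factor $|\eta|^{\lambda+1/2}\le|\eta|^{\beta}$) or by the mean value theorem (when $\lambda+\tfrac12>1$, giving $|\eta|\le|\eta|^{\beta}$); there the decay factor is bounded below and may be inserted for free. For large $x$, where $h^{[\lambda]}$ is smooth, I would use the mean value theorem together with the second-difference structure of $(h^{[\lambda]})'$ to get $|(h^{[\lambda]})'(v)|\le C(3+|v|)^{\lambda-5/2}$, which yields the stated bound after absorbing $|\eta|\le|\eta|^{\beta}$ and $\lambda-\tfrac52\le\overline H-\tfrac32$. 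The delicate points are the non-smoothness of $h^{[\lambda]}$ at $0,\tfrac12,1$ when $\lambda<\tfrac12$ and obtaining all constants uniformly over the compact range of $\lambda$; the crucial gain is that $\beta>\underline H$.

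With this lemma the two blocks become routine. In the high-frequency block I would use $|a-b|^2\le 2|a|^2+2|b|^2$, Lemma~\ref{lemma1}, and the summability $\sum_{k}(3+|2^jt-k|)^{2\overline H-3}\le C_2$ already established in the proof of Theorem~\ref{theorem1}, so that the high-frequency contribution is at most $C\sum_{j>N}2^{-2j\underline H}\le C'2^{-2N\underline H}\le C''\delta^{2\underline H}$. In the low-frequency block the increment lemma with $\eta=2^j(t''-t')$ and the same summation over $k$ give a low-frequency contribution at most $C\delta^{2\beta}\sum_{j=0}^{N}2^{2j(\beta-\underline H)}$; since $\beta>\underline H$ this geometric sum is dominated by its last term $2^{2N(\beta-\underline H)}\le\delta^{-2(\beta-\underline H)}$, whence the bound reduces to $C\delta^{2\underline H}$. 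Adding the two blocks proves ${\mathbb E}(|X(t'')-X(t')|^2)\le c\,\delta^{2\underline H}$.

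Finally, to pass from this $L_2$ estimate to the stated path regularity I would invoke Gaussianity: since $X(t'')-X(t')$ is centered Gaussian, ${\mathbb E}|X(t'')-X(t')|^{2m}\le c_m\,|t''-t'|^{2m\underline H}$ for every integer $m\ge 1$. The Kolmogorov continuity theorem then yields a modification whose paths are a.s.\ H\"older continuous of every order $\gamma<\underline H-\tfrac{1}{2m}$, and letting $m\to\infty$ gives H\"older continuity of any order strictly less than $\underline H$.
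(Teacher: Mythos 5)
Your proof is correct, and at the global level it follows the same strategy as the paper: the $L_2$ isometry for the series, a dyadic split at the scale of the increment (your $N$ is the paper's $j_1$ from (\ref{eq17})), and a high-frequency block handled exactly as in (\ref{eq22})--(\ref{eq23}). The genuine difference is in the low-frequency block. The paper splits the $k$-sum there into ``far'' indices $k\le[2^jt']-2$, treated by the mean value theorem with the second-difference structure giving decay $(2^jt'-k-1)^{\overline H-5/2}$ as in (\ref{eq18f})--(\ref{eq19}), and ``near'' indices $k\in\{[2^jt']-1,\dots,[2^jt'']\}$ (at most three per level), treated by the H\"older-type inequality (\ref{eq20}), which carries the exponent $(H_{j,k}+\frac12)\wedge 1$ but no spatial decay --- hence the restriction to three terms. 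You instead prove one uniform increment lemma, $|h^{[\lambda]}(x+\eta)-h^{[\lambda]}(x)|\le C|\eta|^{\beta}(3+|x|)^{\overline H-3/2}$ with $\beta=(\underline H+\frac12)\wedge 1$, which interpolates the two mechanisms (mean value theorem away from $0,\frac12,1$; subadditivity of $(\cdot)_+^{p}$, $p\le 1$, near them) into a single bound carrying both the H\"older gain and summable spatial decay, so you can sum over all $k$ at once and finish with the geometric sum $\delta^{2\beta}\sum_{j\le N}2^{2j(\beta-\underline H)}\le C\delta^{2\underline H}$. This is the same exponent arithmetic as the paper's identity $-(1\wedge 2(1-\underline H))+(2\underline H+1)\wedge 2=2\underline H$ --- both arguments hinge on $\beta>\underline H$ --- but your packaging is cleaner and the lemma is reusable, at the modest cost of verifying it uniformly in $\lambda$ and choosing the two regions with enough of a buffer that the mean-value segment avoids the singularities when $\lambda<\frac12$ (a point you correctly flag). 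You are also more explicit than the paper on the final step: the paper asserts the path regularity directly after the second-moment bound, while you derive it from Gaussianity of the increments and Kolmogorov's continuity criterion with $m\to\infty$, which is the standard justification (with the usual caveat that this produces a H\"older-continuous modification).
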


\begin{proof}[Proof of Theorem~{\rm \ref{prop5}}]
     Without loss of generality, one can assume that $0<t''-t'\leq 1.$ Let $j_1$ be the unique non-negative integer such that
    \begin{equation}\label{eq17}
        2^{-j_1-1}<t''-t'\leq2^{-j_1}.
    \end{equation}
 By the mean value theorem, using the fact that  $h^{[H_{j,k}]} \in C^\infty([2,+\infty]),$    one obtains that for all $j \in \{0, \dots, j_1\}$ and $k\leq [2^jt']-2$ it holds
    \begin{equation}\label{eq18f}
        \left|h^{[H_{j,k}]}(2^jt''-k)-h^{[H_{j,k}]}(2^jt'-k)\right| \leq 2^j(t''-t') \left(h^{[H_{j,k}]}\right)'\left(a_1-k\right),
    \end{equation}
    where $a_1 \in (2^jt',2^jt'').$ Moreover, by applying the mean value theorem two times, one obtains that for some constant $C_1$ not depending on $t',$ $t'',$  $j_0,$ $j$ and $k$ it holds
    \begin{equation}\label{eq18}
        \left|\left(h^{[H_{j,k}]}\right)'\left(a_1-k\right)\right|\leq C_1\left(2^jt'-k-1\right)^{H_{j,k}-\frac{5}{2}} \leq C_1\left(2^jt'-k-1\right)^{\overline{H}-\frac{5}{2}}.
    \end{equation}

Combining (\ref{eq17}) and (\ref{eq18}), one gets
\begin{align}
    &\sum_{j=0}^{j_{1}} \sum_{k=0}^{[2^jt']-2} 2^{-2jH_{j,k}} \left|h^{[H_{j,k}]}(2^jt''-k)-h^{[H_{j,k}]}(2^jt'-k)\right|^2 \nonumber \\
    &\quad\leq C_1^2 \sum_{j=0}^{j_1} 2^{2j(1-\underline{H})} \sum_{k=0}^{[2^jt']-2} \left(2^jt'-k-1\right)^{2\overline{H}-5} (t''-t')^2 \nonumber \\
    &\quad\leq C_1^2 \left(\sum_{p=1}^{+\infty} p^{2\overline{H}-5}\right) 2^{2(j_1+1)(1-\underline{H})} (t''-t')^2 \leq C_2(t''-t')^{2\underline{H}}.\label{eq19}
\end{align}

Now we investigate the case of $j \in \{0, \dots, j_{1}\}$ and $k \in \{[2^jt']-1,\dots,[2^jt'']\}.$ Notice that in this case the latter set for $k$ consists of at most 3 elements.

We will use the inequality
\begin{equation}\label{eq20}
    |(b)_{+}^{\beta}-(a)_{+}^{\beta}| \leq 2(1+\kappa) |b-a|^{\beta\wedge 1},
\end{equation}
which holds for all $\beta \in (0,2)$ and real numbers $a,b \in [-\kappa,\kappa]$, where $\kappa>0$ is an arbitrary fixed constant.

By applying (\ref{eq8}) and (\ref{eq20}) with $\kappa=4$, one gets

\begin{align}
&\left|h^{[H_{j,k}]}(2^jt''-k)-h^{[H_{j,k}]}(2^jt'-k)\right| \nonumber\\
    &\quad\leq 4 \sum_{l=0}^{2} \left|\left(2^jt''-k-\frac{l}{2}\right)_{+}^{H_{j,k}+\frac{1}{2}}-\left(2^jt'-k-\frac{l}{2}\right)_{+}^{H_{j,k}+\frac{1}{2}}\right|\nonumber \\
    &\quad\leq 40\cdot \left|2^j(t''-t')\right|^{\left(H_{j,k}+\frac{1}{2}\right)\wedge 1},\label{incr}
\end{align}

and consequently, by (\ref{eq17}),
\begin{align}\label{eq21}
    \sum_{j=0}^{j_{1}}& \sum_{\substack{k=[2^jt']-1 \\ k\geq0}}^{[2^j t'']} 2^{-2jH_{j,k}} \left|h^{[H_{j,k}]}(2^jt''-k)-h^{[H_{j,k}]}(2^jt'-k)\right|^2 \nonumber\\
    & \leq C  \sum_{j=0}^{j_1} 2^{j\left((2H_{j,k}+1)\wedge 2-2H_{j,k}\right)} |t''-t'|^{(2H_{j,k}+1)\wedge 2} \nonumber\\
    &  \leq C\cdot \sum_{j=0}^{j_1}2^{j\left(1\wedge2(1-\underline{H})\right)} \left|t''-t'\right|^{(2\underline{H} +1 )\wedge 2}\leq C\cdot 2^{j_1\left(1\wedge2(1-\underline{H})\right)} \left|t''-t'\right|^{(2\underline{H} +1 )\wedge 2}\nonumber\\
    & \leq C\cdot  |t''-t'|^{-(1\wedge2(1-\underline{H}))+(2\underline{H}+1)\wedge 2}  =C\cdot  |t''-t'|^{2\underline{H}},
\end{align}
since
    \begin{align*}
        &-\left(1\wedge 2(1-\underline{H})\right) + (2\underline{H}+1)\wedge 2 = \dfrac{-\left(1+2(1-\underline{H})-|2(1-\underline{H})-1|\right)}{2} \\
         &\quad + \dfrac{2\underline{H}+1+2-|2\underline{H}+1-2|}{2}
         = 2\underline{H}.
    \end{align*}
Now, let us consider the case of $j \geq j_1+1.$
Using (\ref{eq9}) and (\ref{eq17}), one obtains that for all $t \in [0,1]$ it holds
\begin{align} \label{eq22}
    &\sum_{j=j_1+1}^{+\infty} \sum_{k=0}^{2^j-1} 2^{-2j H_{j,k}} \left|h^{[H_{j,k}]}(2^jt-k)\right|^2  \leq C\cdot\sum_{j=j_1+1}^{+\infty} 2^{-2j\underline{H}} \sum_{k \in \mathbb{Z}} \left(3+|2^jt-k|\right)^{2\overline{H}-3} \nonumber \\
    &\quad \leq C\cdot\sum_{p \in \mathbb{Z}} (2+|p|)^{2\overline{H}-3} \cdot  2^{-2(j_1+1)\underline{H}} \leq C\cdot\left|t''-t'\right|^{2\underline{H}}.  \end{align}
 It follows from (\ref{eq22}) and the inequality $(a-b)^2\leq 2a^2 + 2b^2$ that
 \begin{equation} \label{eq23}
     \sum_{j=j_1+1}^{+\infty} \sum_{k=0}^{2^j-1} 2^{-2j H_{j,k}} \left|h^{[H_{j,k}]}(2^jt''-k)-h^{[H_{j,k}]}(2^jt'-k)\right|^2 \leq C\cdot\left|t''-t'\right|^{2\underline{H}}.
 \end{equation}

Combining the results in (\ref{eq19}), (\ref{eq21}) and (\ref{eq23}) we obtain the statement of Theorem~{\rm \ref{prop5}}.
\end{proof}

\section{Lower bound for H\"older exponent}\label{sec3}
This section derives a lower bound for the  H\"older exponent $\alpha_X(t).$

\begin{assumption} \label{ass1}   For all $t \in (0,1)$ it holds that
    \begin{equation}\label{eq23'}
        H(t)\coloneqq \liminf_{j \rightarrow +\infty} H_{j}(t) < \underline{H} + {1}/{2}.
    \end{equation}
\end{assumption}

For all integers $J \in \mathbb{Z}_{+}$ and $K \in \{0, \dots, 2^{J}-1\}$, we set $d_{J,K}: = {K}/{2^J}$ and  use the notation
\begin{equation}\label{eq25}
    \Delta_{J,K} :=  X(d_{J,K+1}) - 2X(d_{J+1,2K+1}) + X(d_{J,K}).
\end{equation}
Our next objective is to prove the following result.

\begin{proposition}\label{prop1}
There is a universal event of probability $1,$ denoted by $\Omega_{1}^{*},$ such that on $\Omega_{1}^{*}$,  it holds
\[\alpha_{X}(t)\geq H(t)\quad \mbox{for all}\quad t\in (0,1).\]
\end{proposition}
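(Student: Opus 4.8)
The plan is to control all the second-order increments $\Delta_{J,K}$ simultaneously on a single event of probability one, and then to recover a bound on an arbitrary increment $X(t+h)-X(t)$ from them via the Faber--Schauder expansion, whose coefficients are, up to the factor $-1/2$, exactly the $\Delta_{J,K}$. First I would fix the universal event. Each $\Delta_{J,K}$ is a centered Gaussian variable, so $\mathbb P\big(|\Delta_{J,K}|>a\sqrt{J+1}\,\sigma(\Delta_{J,K})\big)\le 2e^{-a^2(J+1)/2}$; since there are $2^J$ indices $K$ at level $J$, taking $a^2>2\log 2$ makes $\sum_{J,K}\mathbb P(\cdots)$ finite, and by the Borel--Cantelli lemma there is a $t$-free event $\Omega_1^*$ of probability one and a finite random constant $C(\omega)$ with
\[
|\Delta_{J,K}|\le C(\omega)\,\sqrt{J+1}\,\sigma(\Delta_{J,K})\qquad\text{for all }J,K.
\]
It is essential to phrase $\Omega_1^*$ through $\sigma(\Delta_{J,K})$ rather than through a uniform bound on the $|\varepsilon_{j,k}|$: the triangle inequality applied to the series for $\Delta_{J,K}$ produces at high frequencies sums $\sum_k|h^{[\lambda]}(x-k)|$, which diverge when $\lambda>1/2$, whereas the variance of $\Delta_{J,K}$ is a \emph{convergent} sum of squares.

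Second, I would estimate $\sigma(\Delta_{J,K})$. By Lemma~\ref{lemma2} and the independence of the $\varepsilon_{j,k}$,
\[
\sigma^2(\Delta_{J,K})=\sum_{j=0}^{+\infty}\sum_{k=0}^{2^j-1}2^{-2jH_{j,k}}\big|\mathrm D^2_{J,K}\,h^{[H_{j,k}]}(2^{j}\cdot-k)\big|^2,
\]
where $\mathrm D^2_{J,K}g:=g(d_{J,K+1})-2g(d_{J+1,2K+1})+g(d_{J,K})$. Splitting into low frequencies $j\le J$ and high frequencies $j>J$ exactly as in the proof of Theorem~\ref{prop5} (two applications of the mean value theorem for the smooth part, inequality~(\ref{eq20}) for the finitely many boundary indices, and the decay~(\ref{eq9}) with the convergent series $\sum_p(2+|p|)^{2\overline H-3}$ for $j>J$) yields
\[
\sigma^2(\Delta_{J,K})\le C\Big(2^{-2JH_{J,K}}+\sum_{j>J}2^{-2jH_j(K2^{-J})}\Big).
\]
The deterministic core is to turn this into an estimate governed by $H(t)=\liminf_j H_j(t)$. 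Fixing $\epsilon>0$ and $n_\epsilon$ with $H_j(t)\ge H(t)-\epsilon/2$ for $j\ge n_\epsilon$, the Lipschitz property and~(\ref{eq2}) give $H_j(K2^{-J})\ge H_j(t)-C(1+j)\rho$ for $\rho:=|K2^{-J}-t|$; this perturbation stays below $\epsilon/2$ for all $j$ up to a threshold of order $\rho^{-1}$, while beyond that threshold $H_j\ge\underline H$ makes the remaining terms super-exponentially small. Hence $\sigma(\Delta_{J,K})\le C\,2^{-J(H(t)-\epsilon)}$ whenever $J$ is large and $K2^{-J}$ is close enough to $t$, the admissible range being dictated by $\rho$ through the polynomial Lipschitz growth.

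Third, I would carry out the reconstruction. Since $X$ is continuous (Theorem~\ref{prop5}) and $X(0)=0$, the Faber--Schauder expansion reads $X(u)=u\,X(1)-\tfrac12\sum_{J,K}\Delta_{J,K}\Lambda_{J,K}(u)$, with $\Lambda_{J,K}$ the tent function on $[K2^{-J},(K+1)2^{-J}]$, valued in $[0,1]$ and $2^{J+1}$-Lipschitz, so that
\[
|X(t+h)-X(t)|\le|h|\,|X(1)|+\tfrac12\sum_{J,K}|\Delta_{J,K}|\,\big|\Lambda_{J,K}(t+h)-\Lambda_{J,K}(t)\big|.
\]
Splitting at the scale $J^*$ with $2^{-J^*-1}<|h|\le 2^{-J^*}$: for $J\le J^*$ at most two indices are active, each with $K2^{-J}$ within $2^{-J}$ of $t$, and $|\Lambda_{J,K}(t+h)-\Lambda_{J,K}(t)|\le 2^{J+1}|h|$, giving $\lesssim|h|\sum_{J\le J^*}\sqrt{J+1}\,2^{J(1-H(t)+\epsilon)}\lesssim\sqrt{\log(1/|h|)}\,|h|^{H(t)-\epsilon}$ (the finitely many scales below $n_\epsilon$ only add an $O(|h|)$ term); for $J>J^*$ the supports at a fixed scale are disjoint, so only the single index $K_t^J$ containing $t$ and the single index $K_{t+h}^J$ containing $t+h$ survive, both with $K2^{-J}$ within $O(2^{-J^*})$ of $t$, whence $\tfrac12\sum_{J>J^*}\big(|\Delta_{J,K_t^J}|+|\Delta_{J,K_{t+h}^J}|\big)\lesssim\sqrt{\log(1/|h|)}\,|h|^{H(t)-\epsilon}$. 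Altogether $|X(t+h)-X(t)|\le C\sqrt{\log(1/|h|)}\,|h|^{H(t)-\epsilon}$ for small $|h|$, so $\alpha_X(t)\ge H(t)-\epsilon$; letting $\epsilon\downarrow0$ gives $\alpha_X(t)\ge H(t)$ on $\Omega_1^*$ for every $t$.

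I expect the main obstacle to lie in the second step and, within the reconstruction, in the control of the terms attached to the \emph{right} endpoint $t+h$. Because $H(\cdot)=\liminf_j H_j(\cdot)$ need not be lower semicontinuous, $H(t+h)$ may be much smaller than $H(t)$, and a careless estimate would wrongly produce $|h|^{H(t+h)}$. The resolution, which is the technical heart, is that the Schauder tail $\sum_{J>J^*}2^{-JH_J(\cdot)}$ is dominated by its \emph{coarsest} scales $J\approx J^*$, at which $H_J$ evaluated near $t+h$ is forced close to $H_J(t)$ by the merely polynomial growth $\nu_j\le C(1+j)$; the genuinely fine-scale liminf at $t+h$ enters only through exponentially small terms. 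Making this threshold explicit and checking that it dominates every intermediate scale, uniformly in $t$, is where the real work lies.
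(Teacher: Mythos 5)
Your architecture matches the paper's first two steps: a Borel--Cantelli event on which $|\Delta_{J,K}|\lesssim\sqrt{J+1}\,\sigma(\Delta_{J,K})$ uniformly (the paper's Lemma~\ref{lemma 9}), followed by the deterministic bound $\sigma(\Delta_{J,K})\le C2^{-J(H(t)-\epsilon)}$ for dyadic points near $t$ (the paper's Lemmas~\ref{lemma5} and~\ref{lemma6}); the paper then concludes by citing Proposition~4 of \cite{daoudi1998construction}, whereas you re-derive that step by hand via the Faber--Schauder expansion, which is a legitimate alternative. The genuine gap is in your second step: your displayed inequality
\[
\sigma^2(\Delta_{J,K})\le C\Bigl(2^{-2JH_{J,K}}+\sum_{j>J}2^{-2jH_j(K2^{-J})}\Bigr)
\]
is false in general, and the correct estimate cannot be obtained without Assumption~\ref{ass1}, i.e.\ $H(t)<\underline H+\tfrac12$, which you never invoke. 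The kernels $h^{[\lambda]}$ are not smooth: they have kinks of H\"older order $\lambda+\tfrac12$ at the dyadic points. When a kink of a coarse-scale wavelet $(j,k)$, $j\le J$, sits at one of the three stencil points of $\Delta_{J,K}$, inequality~(\ref{eq20}) yields for that single term $2^{-2jH_{j,k}}\bigl(2^{j-J}\bigr)^{(2H_{j,k}+1)\wedge 2}=2^{(j-J)}2^{-2JH_{j,k}}$ (when $H_{j,k}\le\tfrac12$), and at coarse scales $j$ the value $H_{j,k}$ can be as small as $\underline H$ no matter how large $\liminf_{j}H_j(t)$ is, since the liminf constrains only the tail in $j$. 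Such a term is of order $2^{-J(2\underline H+1)}$, which dominates both your claimed bound $2^{-2JH_{J,K}}$ and the target $2^{-2J(H(t)-\epsilon)}$ as soon as $H(t)>\underline H+\tfrac12$. This is not a removable technicality: a single coarse term $2^{-jH_{j,k}}h^{[H_{j,k}]}(2^j\cdot-k)\varepsilon_{j,k}$ with $H_{j,k}=\underline H$ has H\"older exponent exactly $\underline H+\tfrac12$ at its kink, so without Assumption~\ref{ass1} the proposition itself fails. The paper isolates precisely these contributions in Lemma~\ref{lemma5} (the bound $R_J^K\le C2^{-J((2\underline H+1)\wedge 2)}(1+J)^{4}$) and uses (\ref{eq23'}) both inside Lemma~\ref{lemma6} and to absorb $R_J^K$ into $C2^{-2J(H(t_0)-\varepsilon)}$; your proposal has no counterpart to this.

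Separately, your reconstruction step is only sketched at the point you yourself call the technical heart. For the right-endpoint coefficients $\Delta_{J,K^J_{t+h}}$ with $J\gg J^*\approx\log_2(1/|h|)$, the Lipschitz bound $\nu_j\le C(1+j)$ keeps $H_J$ near $t+h$ within $\epsilon$ of $H_J(t)$ only for $J\lesssim\epsilon\,2^{J^*}$; beyond that range you must switch to the crude uniform bound $\sigma(\Delta_{J,K})\le C2^{-J\underline H}$ (available from Theorem~\ref{prop5}) to make the ultra-fine scales super-exponentially small relative to $|h|^{H(t)}$. You correctly name this mechanism but do not execute it, so the estimate $\sum_{J>J^*}|\Delta_{J,K^J_{t+h}}|\lesssim\sqrt{\log(1/|h|)}\,|h|^{H(t)-\epsilon}$ is asserted rather than proved; this part is fixable, but the missing use of Assumption~\ref{ass1} above is not.
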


\begin{proof}[Proof of Proposition~{\rm\ref{prop1}}] The key approach of the proof is based on applying Proposition~4 from  {\rm\cite{daoudi1998construction}}. We divide the proof into several lemmas.
\begin{lemma}\label{lemma 9}
There exists an event $\Omega_{1}^{*}$ of probability $1$  and  a random variable $C^{*}_{1}$ with finite moments of any order, such that the inequality
\begin{equation*}
    |\Delta_{J,K}| \leq C^{*}_{1} \sigma (\Delta_{J,K}) \sqrt{\log(3+J+K)}
\end{equation*}
 holds on $\Omega_{1}^{*}$ for all $J\in \mathbb{Z}_{+}$ and $K \in \{0, \dots ,2^{J}-1 \}.$
\end{lemma}

\begin{proof}[Proof of Lemma~{\rm \ref{lemma 9}}]
We will use the fact that, for $Z\sim N(0,1),$ the tail probability $ {\mathbb P}(Z>z)$ can be estimated as follows
\begin{equation}\label{normtail}
 {\mathbb P}(Z>z)\le \frac{\exp(-z^2/2)}{\sqrt{2\pi}z}.\end{equation}
 As  $\{\Delta_{J,K}\}_{J,K}$ are centered Gaussian random variables, then, for $c_{0}>0,$
    \begin{align*}
        & {\mathbb E} \left ( \sum_{J=0}^{+\infty} \sum_{K=0}^{2^J-1} \mathbbm{1}_{\{
|\Delta_{J,K}| > c_{0} \sigma (\Delta_{J,K}) \sqrt{\log(3+J+K)} \}}\right )
 =  \sum_{J=0}^{+\infty} \sum_{K=0}^{2^J-1} {\mathbb P}\left(
|Z| > c_{0} \sqrt{\log(3+J+K)} \right )\\
& < \sum_{J=0}^{+\infty} \sum_{K=0}^{2^J-1} \frac{2}{c_0(3+J+K)^{c_0^2/2}\sqrt{2\pi\log(3+J+K)}} <C\int_0^{\infty}\int_0^\infty (3+x+y)^{-c_0^2/2}dxdy.
    \end{align*}
   Selecting, say, $c_0>\sqrt{6}$ guarantees that the above upper bound is finite, which  implies that
    \begin{equation*}
        C^{*}_{1} \coloneqq \sup_{J \in \mathbb{Z}_{+}} \sup_{0 \leq K< 2^{J}} \dfrac{|\Delta_{J,K}|}{\sigma (\Delta_{J,K}) \sqrt{\log(3+J+K)}}
    \end{equation*}
    is an almost surely finite random variable.

    Then, by applying the Borell-TIS inequality,  we get ${\mathbb E}(|C_{1}^{*}|^{p})< +\infty$ for all~\mbox{$p>0.$}
\end{proof}

Now, for a fixed $t_0\in (0,1),$ let us consider arbitrary $J\in \mathbb{Z}_{+}$ and $K \in \{0, \dots , 2^J -1 \}$ satisfying
    \begin{equation}\label{eq27}
        \left|\dfrac{K}{2^{J}}-t_{0}\right| \leq (1+J)^{-4}.
    \end{equation}
Let us assume that $a_0>2$ is a fixed constant such that for all $J \in \mathbb{Z_{+}}$
\begin{equation} \label{eq27b}
    2^{-1}a_{0} (1+J)^{-4} > 2^{-J}.
\end{equation}

 For all $j \in \mathbb{Z_{+}}$, the finite sets $\nu_{J,j}^{K}$ and $\bar{\nu}_{J,j}^{K}$ are defined as
\begin{equation}\label{eq30}
    \nu_{J,j}^{K} \coloneqq \left\{ k \in \{ 0, \dots , 2^{j}-1\} : |d_{j,k}-d_{J,K}|\leq a_{0} (1+J)^{-4} \right\},
\end{equation}
and
\begin{equation}\label{eq31}
    \bar{\nu}_{J,j}^{K} \coloneqq \left\{ k \in \{ 0, \dots , 2^{j}-1\} : |d_{j,k}-d_{J,K}|> a_{0} (1+J)^{-4} \right\}.
\end{equation}
For every $j \in \mathbb{Z_{+}}$ and $k \in \{0,\dots , 2^{j}-1\}$, one sets
\begin{equation}\label{eq32}
    D_{J,j}^{K,k} \coloneqq h^{[H_{j,k}]} (2^{j}d_{J,K+1}-k)-2h^{[H_{j,k}]} (2^jd_{J+1,2K+1}-k) + h^{[H_{j,k}]} (2^{j}d_{J,K}-k).
\end{equation}
Then, by  ({\rm \ref{eq6}}), Lemma~\ref{lemma2} and  ({\rm \ref{eq25}}), $\Delta_{J,K}$ can be expressed as
\begin{equation*}
    \Delta_{J,K} = \sum_{j=0}^{+\infty} \sum_{k=0}^{2^{j}-1} 2^{-jH_{j,k}} D_{J,j}^{K,k} \varepsilon_{j,k}.
\end{equation*}
Therefore,
\begin{equation*}
    {\mathbb E}(|\Delta_{J,K}|^2) = M_{J}^{K} + R_{J}^{K},
\end{equation*}
where
\begin{equation}\label{M+R}
    M_{J}^{K} \coloneqq \sum_{j=0}^{+\infty} \sum_{k\in \nu_{J,j}^{K}} 2^{-2jH_{j,k}} |D_{J,j}^{K,k}|^{2},
\end{equation}
and
\begin{equation}\label{eq35}
    R_{J}^{K} \coloneqq \sum_{j=0}^{+\infty} \sum_{k\in \bar{\nu}_{J,j}^{K}} 2^{-2jH_{j,k}} |D_{J,j}^{K,k}|^{2}.
\end{equation}
First, let us investigate $R_{J}^{K}.$

\begin{lemma}\label{lemma5}
        For any fixed $t_{0}\in (0,1)$, there is a constant $C>0$, such that, for every $J \in \mathbb{Z}_{+}$ and $K \in \{0, \dots, 2^J -1\}$ satisfying {\rm(\ref{eq27})}, the following inequality holds
                \begin{equation*}
            R_{J}^{K} \leq C 2^{-J((2\underline{H}+1)\wedge 2)}(1+J)^{4}.
        \end{equation*}
    \end{lemma}

 To prove Lemma~\ref{lemma5} we need the following result.

\begin{lemma}\label{lemma4}
   Let $a_0>2$ be the same constant as in {\rm (\ref{eq27b})}. Let $J \in \mathbb{Z}_{+}$ and $K \in \{0, \dots, 2^J -1\}$ be arbitrary and such that {\rm(\ref{eq27})} holds. For all $j \in \mathbb{Z_{+}}$ and $k\in \{0,\dots, 2^{j}-1\}$ such that
    \begin{equation}\label{eq36}
        d_{j,k} > d_{J,K} + a_{0}(1+J)^{-4},
    \end{equation}
    one has $D_{J,j}^{K,k}=0$.
\end{lemma}
\begin{proof}[Proof of Lemma~{\rm \ref{lemma4}}]
    It follows from ({\rm \ref{eq36}}), the definition of $d_{J,K}$ and ({\rm \ref{eq27b}}) that $d_{j,k}>d_{J,K+1}+2^{-1}a_{0}(1+J)^{-4}$,
    which implies that
    \begin{equation*}
        2^{j}d_{j,k} = k > 2^{j} d_{J,K+1},
    \end{equation*}
    i.e. $2^{j} d_{J,K+1}-k<0.$ Then, by the inequalities $2^{j}d_{J,K+1}>2^{j}d_{J+1,2K+1}>2^{j}d_{J,K}$ and the relations ({\rm \ref{eq32}}) and ({\rm \ref{eq8}}), one obtains that $D_{J,j}^{K,k}=0$.
\end{proof}

\begin{proof}[Proof of Lemma~{\rm \ref{lemma5}}]
For  $j \in \mathbb{Z}_{+}$, let us define
    \begin{equation}\label{eq37}
    \tilde{k} (j,J,K) : = \max\{ k:\, k \in {0,\dots, 2^{j}-1},\, d_{j,k}<d_{J,K}-a_{0}(1+J)^{-4} \}
    \end{equation}
    with the convention that $\tilde{k}(j,J,K):=-1,$ when the set over which the maximum is taken is empty.

First, let us consider the case when
\begin{equation}\label{eq38}
    j>\tilde{L}(J)\coloneqq 4 \log_{2}(1+J).
\end{equation}
Then, as $a_{0}>2,$ for  all $k\leq \tilde{k}(j,J,K),$ it holds
\begin{equation}\label{eq39}
    d_{j,k+1}=d_{j,k}+{2^{-j}}<d_{J,K}-(a_{0} (1+J)^{-4}-2^{-j})<d_{J,K}-2^{-1} a_{0} (1+J)^{-4}.
\end{equation}

Setting the values $\beta_0=\beta_2=1$ and $\beta_1=-2,$ it follows from (\ref{eq4}), (\ref{eq32}) and Lemma~\ref{lemma2}  that
\begin{align*}
    2^{-jH_{j,k}} D_{J,j}^{K,k} & = 2^{\frac{j}{2}} \int_{\frac{k}{2^j}}^{\frac{k+{1}/{2}}{2^j}} \sum_{m=0}^{2} \beta_{m} \left\{( d_{J+1,2K+m}-s)^{H_{j,k}-\frac{1}{2}}\right.\\
    & \left.\ \ \ - (d_{J+1,2K+m}-s-2^{-j-1})^{H_{j,k}-\frac{1}{2}} \right\} ds.
\end{align*}
Using the mean value theorem three times and noting that $d_{J,K}-s-2^{-j-1}\in(0,1),$ we obtain
\begin{align}
    2^{-j H_{j,k}} |D_{J,j}^{K,k}| &\leq 2^{-\frac{j}{2}-2J} \int_{d_{j+1,2k}}^{d_{j+1,2k+1}} \left( d_{J,K}-s-2^{-j-1}\right)^{H_{j,k}-\frac{7}{2}} ds\nonumber \\
    & \leq 2^{-\frac{j}{2}-2J} \int_{d_{j+1,2k}}^{d_{j+1,2k+1}} \left( d_{J,K}-s-2^{-j-1}\right)^{\underline{H}-\frac{7}{2}} ds.\label{upb}
\end{align}

Applying the Cauchy-Schwarz inequality to the integrals in (\ref{upb}), we deduce from~(\ref{eq39}) that
\begin{align*}
    \sum_{k=0}^{\tilde{k}(j,J,K)} |2^{-jH_{j,k}}D_{J,j}^{K,k}|^2 & \leq 2^{-2j-4J} \int_{0}^{d_{j,\tilde{k}(j,J,K)+1}} (d_{J,K}-s)^{2\underline{H}-7} ds \\
    & \leq 2^{-2j-4J}\left( d_{J,K} - d_{j,\tilde{k}(j,J,K)+1}\right)^{2\underline{H}-6}\\
    & \leq C 2^{-2j-4J} (1+J)^{4(6-2\underline{H})}.
\end{align*}

Thus, the following upper bound holds true
\begin{equation}\label{eq40}
    \sum_{j=\tilde{L}(J)+1}^{+\infty} \sum_{k=0}^{\tilde{k}(j,J,K)} |2^{-jH_{j,k}}D_{J,j}^{K,k}|^{2} \leq C 2^{-4J} (1+J)^{4(4-2\underline{H})},
\end{equation}

where the constant $C$ does not depend on $J.$

Secondly, let $j$ do not satisfy ({\rm \ref{eq38}}), i.e. $j\le\tilde{L}(J).$ Note that  it follows  from (\ref{eq37}) that $d_{j,k+1}<d_{J,K}$ if $k=0,\dots, \tilde{k}(j,J,K)-1,$ and all estimates from the first case remain unchanged. Therefore, similarly to  ({\rm \ref{eq40}}),  one obtains that
\begin{equation}\label{eq41}
    \sum_{j=0}^{\tilde{L}(J)} \sum_{k=0}^{\tilde{k}(j,J,K)-1}|2^{-jH_{j,k}}D_{J,j}^{K,k}|^{2}\leq C 2^{-4J} (1+J)^{4(6-2\underline{H})}.
\end{equation}
Moreover, by (\ref{eq8}), (\ref{eq20}) (with a fixed constant $\kappa$ large enough), ({\rm \ref{eq32}}) and (\ref{eq38})  we get that
\begin{align}\label{eq42}
    \sum_{j=0}^{\tilde{L}(J)} |2^{-jH_{j,\tilde{k }(j,J,K)}}D_{J,j}^{K,\tilde{k}(j,J,K)}|^{2} & \leq C 2^{-J((2\underline{H}+1)\wedge 2)} \sum_{j=0}^{\tilde{L}(J)} 2^{j(1 \wedge 2(1-\underline{H}))} \nonumber\\
    & \leq C 2^{-J((2\underline{H}+1)\wedge 2)} \cdot 2^{\tilde{L}(J)(1 \wedge 2(1-\underline{H}))} \nonumber\\
    & \leq C 2^{-J((2\underline{H}+1)\wedge 2)} (1+J)^4,
    \end{align}
 where the constant $C$ does not depend on $J.$

Combining Lemma~{\rm \ref{lemma4}}, ({\rm \ref{eq31}}), ({\rm \ref{eq35}}), ({\rm \ref{eq37}}),   ({\rm \ref{eq40}}), ({\rm \ref{eq41}}) and ({\rm \ref{eq42}}) we obtain the statement of Lemma~\ref{lemma5}.
\end{proof}

Let us now establish an upper bound for $M_{J}^{K}$ such that the upper bound for $R_{J}^{K}$ presented in Lemma~\ref{lemma5} becomes negligible in comparison.

    \begin{lemma}\label{lemma6}
        Under the assumption {\rm(\ref{eq23'})}, for any fixed $t_{0}\in (0,1)$ and sufficiently small  $\varepsilon>0$, there is a constant $C>0$, such that, for every $J \in \mathbb{Z}_{+}$ and $K \in \{0, \dots, 2^J -1\}$ satisfying {\rm(\ref{eq27})}, the following inequality holds
        \begin{equation*}
            M_{J}^K\leq C 2^{-2J(H(t_{0})-\varepsilon)}.
        \end{equation*}
    \end{lemma}

    \begin{proof}[Proof of Lemma~{\rm \ref{lemma6}}]
     First notice that for every $t_{0}\in (0,1)$ and~$\varepsilon>0,$ there exists $j_{0} \in \mathbb{Z}_{+}$, that only depends on $t_{0}$ and~$\varepsilon$, such that, for all $j\geq j_{0}$
     \begin{equation}\label{eq45}
         H_{j}(t_{0}) \geq H(t_{0})-\varepsilon.
     \end{equation}
Using (\ref{eq8}), (\ref{eq20}) (with a fixed constant $\kappa$ large enough), (\ref{eq32}), and noting that the following sum has a finite number of terms, one obtains
     \begin{align}\label{uptoj0}
         &\sum_{j=0}^{j_{0}-1} \sum_{k \in \nu_{J,j}^{K}} |2^{-j H_{j,k}}D_{J,j}^{K,k}|^{2} \leq \sum_{j=0}^{j_{0}-1} \sum_{k=0}^{2^{j}-1} |2^{-j H_{j,k}}D_{J,j}^{K,k}|^{2} \nonumber \\
         &\quad \leq C\sum_{j=0}^{j_{0}-1} \sum_{k=0}^{2^{j}-1} 2^{-2j H_{j,k}}\left(\frac{2^j}{2^J} \right)^{(2H_{j,k}+1)\wedge 2)} \leq C 2^{-J((2\underline{H}+1)\wedge 2)},
     \end{align}
     where the constant $C$ does not depend on $J$ and $K.$

  Now, by  selecting $j_1=J^2$ in (\ref{eq22}) we obtain that for all
$t \in [0, 1]$ it holds
   \begin{equation} \label{J2}
   \sum_{j=J^2+1}^{+\infty} \sum_{k=0}^{2^j-1} 2^{-2j H_{j,k}} \left|h^{[H_{j,k}]}(2^jt-k)\right|^2  \leq C   2^{-2J^2\underline{H}}.
   \end{equation}
  Notice that by (\ref{eq32})
   \begin{eqnarray}
   |D_{J,j}^{K,k}| \leq |h^{[H_{j,k}]} (2^{j}d_{J,K+1}-k)| +2|h^{[H_{j,k}]} (2^jd_{J+1,2K+1}-k)|+|h^{[H_{j,k}]} (2^{j}d_{J,K}-k)|\nonumber
   \end{eqnarray}
and the application of the estimate (\ref{J2}) results in
     \begin{equation}\label{upJ2}
         \sum_{j=J^2+1}^{+\infty} \sum_{k \in \nu_{J,j}^{K}}|2^{-jH_{j,k}}D_{J,j}^{K,k}|^2 \leq \sum_{j=J^2+1}^{+\infty} \sum_{k=0}^{2^j-1} |2^{-jH_{j,k}}D_{J,j}^{K,k}|^2 \leq C 2^{-2J^2\underline{H}},
     \end{equation}
   where the constant $C$ does not depend on $J$ and $K$.

It follows from ({\rm \ref{eq1}}), ({\rm \ref{eq2}}), ({\rm \ref{eq27}}) and ({\rm \ref{eq30}}) that for all $j \in \{ j_{0}, \dots, J^{2}\}$ and $k \in \nu_{J,j}^{K}:$
     \begin{align}
         j|H_{j,k}-H_{j}(t_{0})| &\leq C j(1+j)|d_{j,k}-t_{0}| \nonumber\\
         & \leq C (1+J)^4 \left( |d_{j,k}-d_{J,K}| + |d_{J,K}-t_{0}| \right)  \leq C,\label{eq48}
     \end{align}
          where the constant $C$ does not depend on $J$, $K$, $j$ and $k$.

         By ({\rm \ref{eq45}}) and ({\rm \ref{eq48}}) we obtain
     \begin{equation*}
         2^{-jH_{j,k}} \leq C2^{-jH_{j}(t_{0})} \leq C 2^{-j(H(t_{0})-\varepsilon)}
     \end{equation*}
     and, therefore,
      \begin{equation*}
         \sum_{j=j_{0}}^{J^{2}} \sum_{k \in \nu_{J,j}^{K}} |2^{-jH_{j,k}}D_{J,j}^{K,k}|^{2} \leq C \sum_{j=j_{0}}^{J^{2}} \sum_{k \in \nu_{J,j}^{K}} |2^{-j(H(t_{0})-\varepsilon)}D_{J,j}^{K,k}|^2 \leq C (M_{J}^{1}+M_{J}^{2}),
     \end{equation*}
     where
     \begin{equation*}
         M_{J}^{1} = \sum_{j=j_{0}}^{J} \sum_{k=0}^{2^j-1} |2^{-j(H(t_{0})-\varepsilon)}D_{J,j}^{K,k}|^2,
     \end{equation*}
     and
     \begin{equation*}
         M_{J}^{2} = \sum_{j=J+1}^{J^2} \sum_{k=0}^{2^j-1} |2^{-j(H(t_{0})-\varepsilon)}D_{J,j}^{K,k}|^2.
     \end{equation*}

To establish the upper bound for $M_{J}^{2},$ one can use (\ref{eq32}) and the same approach as in (\ref{eq22}), which results in
     \begin{equation}\label{MJ}
         M_{J}^{2} \leq C\sum_{j=J+1}^{J^2} 2^{-2j(H(t_{0})-\varepsilon)} \left(\sum_{p \in \mathbb{Z}} (2+|p|)^{2\overline{H}-3}\right) \leq C 2^{-2J(H(t_{0})-\varepsilon)}.
     \end{equation}

     For $j \in \{j_{0}, \dots,J\}$, let us define the following integer indices
     \begin{equation*}
     \check{k}^{+}(J,K,j) \coloneqq \max \left \{ k \in \{ 0,\dots,2^j -1 \}: \frac{k}{2^j} \leq \dfrac{K+1}{2^J} \right \},
    \end{equation*}
     and
     \begin{equation}\label{eq54}
         \check{k}^{-}(J,K,j) \coloneqq \max \left \{ k \in \{ 0,\dots,2^j -1 \}: \frac{k+2}{2^j} \leq \dfrac{K}{2^J} \right \}.
     \end{equation}
   When $k \leq \check{k}^{-}$, it follows from the definition of  $D_{J,j}^{K,k}$ and Lemmas~\ref{lemma1} and \ref{lemma2} that all $(\cdot)_+$ terms from Lemma~\ref{lemma1}, that appear in the corresponding expressions for $h^{[H{j,k}]}(\cdot),$ have non-negative arguments. Thus, the positive part functions can be omitted. Applying the mean value theorem four times,  one obtains
     \begin{align}\label{eq55}
         \left|D_{J,j}^{K,k}\right| & = \left |\sum_{0\leq l,m\leq 2} \dfrac{\beta_{l}\beta_{m}}{H_{j,k}+{1}/{2}} \left( 2^{j} d_{J+1,2K+l} -k-\frac{m}{2} \right)^{H_{j,k}+\frac{1}{2}}\right | \nonumber \\
         & \leq C \left(\frac{2^j}{2^J}\right)^2 \left ( 2^{j} d_{J,K} -k-1\right)^{H_{j,k}-\frac{7}{2}},
     \end{align}
  where $\beta_{l},$ $l=0,1,2,$ are the same as in the proof of Lemma~{\rm \ref{lemma4}}. The multiplier $\left({2^j}/{2^J}\right)^2$ appeared from applying the mean value theorem twice for $l$ on intervals of lengths less than $2^j/2^J,$ and twice on intervals with lengths not exceeding 1.

    Next, notice that for $k\leq \check{k}^{-}(J,K,j),$ it follows from ({\rm \ref{eq54}}), that the power base in the last multiplier of (\ref{eq55}) can be bounded as follows
    \begin{equation}\label{eq56}
        2^{j} \left(d_{J,K}-\dfrac{\check{k}^{-}(J,K,j)+2}{2^{j}}\right)+ \check{k}^{-}(J,K,j) +2-k-1  \geq \check{k}^{-}(J,K,j)-k+1 \geq 1.
    \end{equation}
    Therefore, ({\rm \ref{eq55}}) and ({\rm \ref{eq56}}) imply the estimate
    \[\sum_{k=0}^{\check{k}^{-}(J,K,j)} |D_{J,j}^{K,k}|^{2} \leq C \left(\frac{2^j}{2^J}\right)^{4} \cdot\sum_{k=0}^{\check{k}^{-}(J,K,j)} (\check{k}^{-}(J,K,j) -k+1)^{2\overline{H}-7} \leq C \sum_{m=1}^{+\infty} m^{2\overline{H}-7} \cdot \left(\frac{2^{j}}{2^{J}}\right)^{4}.
    \]
    Next, let represent $M_{J}^{1}$ as the sum $M_{J}^{1,-} +M_{J}^{1,+},$ where the summands are defined and estimated below. For the first summand, it holds
    \begin{align}
        M_{J}^{1,-} &\coloneqq \sum_{j=j_{0}}^{J} \sum_{k=0}^{\check{k}^{-}(J,K,j)} |2^{-j(H(t_{0})-\varepsilon)}D_{J,j}^{K,k}|^{2} \leq C \sum_{j=j_{0}}^{J} 2^{-2j(H(t_{0})-\varepsilon)} \cdot 2^{-4(J-j)} \nonumber \\
        & = C \sum_{j=0}^{J-j_{0}} 2^{-2(J-j)(H(t_{0})-\varepsilon)} \cdot 2^{-4j}  = C 2^{-2J(H(t_{0})-\varepsilon)} \sum_{j=0}^{J-j_{0}} 2^{-2j(2-H(t_{0})+\varepsilon)} \nonumber \\
        & \leq C \left ( \sum_{j=0}^{+\infty} 2^{-2j(2-H(t_{0})+\varepsilon)}\right) \cdot 2^{-2J(H(t_{0})-\varepsilon)}.\label{M1-}
        \end{align}
The second summand is given by
        \begin{equation}\label{eq59}
            M_{J}^{1,+} \coloneqq \sum_{j=j_{0}}^{J} \sum_{k=\check{k}^{-}(J,K,j)+1}^{\check{k}^{+}(J,K,j)} |2^{-j(H(t_{0})-\varepsilon)}D_{J,j}^{K,k}|^{2}.
        \end{equation}
        The corresponding set
       \[
       \left\{ k \in \{0,\dots,2^j-1\}: \check{k}^{-}(J,K,j)+1 \leq k \leq \check{k}^{+}(J,K,j) \right\} \]
        \begin{equation}\label{eq60}=      \left\{ k \in \{0,\dots,2^j-1\}: \frac{K}{2^J}-\frac{2}{2^j}<\frac{k}{2^j}\leq \frac{K+1}{2^J} \right\}
        \end{equation}
        consists of at most 3 integer numbers.
        Indeed, for all $j \in \{j_{0},\dots,J\}$, the range of bounds for $k$ is strictly less than 3, as
        \begin{equation*}
            2^j\left(\frac{K+1}{2^J}-\left(\frac{K}{2^J}-\frac{2}{2^j} \right)\right) = \frac{2^j}{2^J} + 2 \leq 3.
        \end{equation*}

     Notice that (\ref{eq60}) can written as
        \[
             d_{J,K}-\frac{2}{2^j} < \frac{k}{2^j} \leq d_{J,K} + \frac{1}{2^J}, \]
     and
            \[-2  < k-2^{j} d_{J,K} \leq \frac{2^j}{2^J} \leq 1. \]
            Therefore, it holds $ |2^{j}d_{J,K}-k|\leq 2,$ which implies that
            \[|2^{j} d_{J+1,2K+l}-k| \leq 3, \quad \text{when} \quad  l \in \{ 0,1,2\}.\]

        Then, by ({\rm \ref{eq59}}), ({\rm \ref{eq60}}) and Assumption~\ref{ass1}  one obtains
        \begin{align}
            M_{J}^{1,+} &\leq C \sum_{j=j_{0}}^{J} 2^{-2j(H(t_{0})-\varepsilon)} \left(\frac{2^j}{2^J}\right)^{(2H_{j,k}+1)\wedge 2} \nonumber\\
            & \leq C \sum_{j=j_{0}}^{J} 2^{-2j(H(t_{0})-\varepsilon)} \cdot 2^{-(J-j)((2\underline{H}+1)\wedge 2)} \nonumber\\
            & = C \sum_{j=0}^{J-j_{0}} 2^{-2(J-j)(H(t_{0})-\varepsilon)} \cdot 2^{-j((2\underline{H}+1)\wedge 2)} \nonumber\\
            & = C 2^{-2J(H(t_{0})-\varepsilon)} \sum_{j=0}^{J-j_{0}} 2^{-2j((\underline{H}+\frac{1}{2})\wedge 1 -H(t_{0})+\varepsilon)} \leq C 2^{-2J(H(t_{0})-\varepsilon)}.\label{Mj+}
        \end{align}
The combination of the upper bounds (\ref{uptoj0}),  (\ref{upJ2}), (\ref{MJ}), (\ref{M1-}) and (\ref{Mj+}) gives
\begin{equation*}
            M_{J}^K\leq C 2^{-J((2\underline{H}+1)\wedge 2)}+C 2^{-2J^2\underline{H}}+C 2^{-2J(H(t_{0})-\varepsilon)}\leq C 2^{-2J(H(t_{0})-\varepsilon)}.
        \end{equation*}
        which proves Lemma~\ref{lemma6}.
    \end{proof}

 To complete the proof of Proposition~\ref{prop1}, notice that by Lemma~{\rm \ref{lemma 9}} and Proposition {\rm 4} in ~{\rm\cite{daoudi1998construction}}, it is enough to check that, for each fixed $t_{0} \in (0,1)$ and $\varepsilon >0$, there exists a constant $C$, which only depends on $\varepsilon$ and $t_0$, such, that for all $J\in \mathbb{Z}_{+}$ and $K \in \{0, \dots , 2^J -1 \}$ satisfying (\ref{eq27}) it holds
    \begin{equation*}
        \sigma(\Delta_{J,K}) \leq C 2 ^{-J (H(t_0)-\varepsilon)}.
    \end{equation*}
By applying (\ref{M+R}) and Lemmas~\ref{lemma5} and~\ref{lemma6}, one obtains the above estimate, which finishes the proof.
    \end{proof}

\section{Upper bound for H\"older exponent}\label{sec4}

This section derives an upper bound for the  H\"older exponent $\alpha_X(t).$

\begin{proposition}\label{proposition 2}
There is a universal event of probability $1,$ denoted by $\Omega_{2}^{*},$ such that on $\Omega_{2}^{*}$,  it holds
\[\alpha_{X}(t)\leq H(t)\quad \mbox{for all}\quad t\in (0,1).\]
\end{proposition}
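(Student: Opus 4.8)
The plan is to derive the matching upper bound from the very same second-order increments $\Delta_{J,K}$ of (\ref{eq25}), but exploited in the reverse direction: instead of bounding them from above, I must show that, almost surely and uniformly in the base point, they stay \emph{large} along a sequence of dyadic triples shrinking to any prescribed $t_0$. The bridge to Definition~\ref{def1} is the elementary remark that the weights $1,-2,1$ in (\ref{eq25}) sum to zero, so that
\[
    |\Delta_{J,K}|\le 4\sup_{|h|\le \rho_{J,K}}\bigl|X(t_0+h)-X(t_0)\bigr|,
\]
where $\rho_{J,K}$ is the largest of the distances from $t_0$ to $d_{J,K}$, $d_{J+1,2K+1}$, $d_{J,K+1}$. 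Hence, if I can produce $K_n/2^{J_n}\to t_0$ with $|\Delta_{J_n,K_n}|\ge c\,2^{-J_n(H(t_0)+\varepsilon)}$ while $\rho_{J_n,K_n}=O(J_n 2^{-J_n})$, then $\limsup_{h\to0}|X(t_0+h)-X(t_0)|/|h|^{\alpha}=0$ becomes impossible for any $\alpha>H(t_0)+\varepsilon$, which is exactly $\alpha_X(t_0)\le H(t_0)+\varepsilon$.

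First ingredient: a variance lower bound. In the expansion $\Delta_{J,K}=\sum_{j,k}2^{-jH_{j,k}}D_{J,j}^{K,k}\varepsilon_{j,k}$ recorded before Lemma~\ref{lemma5}, I would retain only the diagonal term $j=J$, $k=K$. Since $2^Jd_{J,K}-K=0$, $2^Jd_{J+1,2K+1}-K=\tfrac12$ and $2^Jd_{J,K+1}-K=1$, formula (\ref{eq8}) gives
\[
    D_{J,J}^{K,K}=\Bigl(H_{J,K}+\tfrac12\Bigr)^{-1}\bigl(1-2^{\,3/2-H_{J,K}}\bigr),
\]
which is bounded away from $0$ for $H_{J,K}\in[\underline{H},\overline{H}]\subset(0,1)$. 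Thus the coefficient $a_{J,K}:=2^{-JH_{J,K}}D_{J,J}^{K,K}$ of the \emph{independent} Gaussian $\varepsilon_{J,K}$ obeys $|a_{J,K}|\ge c_1 2^{-JH_{J,K}}$ for a constant $c_1>0$ not depending on $J,K$.

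The core of the argument, and its main obstacle, is to convert this into an almost-sure lower bound on $|\Delta_{J,K}|$ that is uniform over the uncountable family of base points $t_0$. I would build a single event $\Omega_2^*$ of probability $1$ on which, for all large $J$, \emph{every} block of $b_J:=\lceil AJ\rceil$ consecutive indices contains some $K$ with $|\Delta_{J,K}|\ge|a_{J,K}|$, the constant $A$ being fixed at the end. The obstruction is the dependence among the $\Delta_{J,K}$ at a fixed scale; to tame it, observe from (\ref{eq8}), (\ref{eq10}) and (\ref{eq32}) that $D_{J,J}^{K,k}=0$ whenever $k\ge K+1$, so that $\Delta_{J,K}$ involves the scale-$J$ coefficients only through $\varepsilon_{J,0},\dots,\varepsilon_{J,K}$. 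Conditioning on the $\sigma$-field $\mathcal{M}$ generated by all Gaussians except those indexed by the block, and listing the block indices as $K_1<\dots<K_{b_J}$, the vector $(\Delta_{J,K_i})_i$ becomes (given $\mathcal{M}$) a lower-triangular Gaussian system whose $i$-th conditional law, given also $\varepsilon_{J,K_1},\dots,\varepsilon_{J,K_{i-1}}$, is $\mathcal N(\mu_i,a_{J,K_i}^2)$ with $\mathcal{M}$-and-past-measurable mean $\mu_i$. The Gaussian anti-concentration estimate $\inf_{\mu}\mathbb{P}(|\mu+N(0,1)|\ge1)=\mathbb{P}(|N(0,1)|\ge1)=:p_0>0$ then yields $\mathbb{P}(|\Delta_{J,K_i}|<|a_{J,K_i}|\mid\text{past})\le 1-p_0$, and telescoping over $i$ gives $\mathbb{P}(\text{the block is bad})\le(1-p_0)^{b_J}$. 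Choosing $A$ so large that $-A\log_2(1-p_0)>1$ makes $\sum_J (2^J/b_J)(1-p_0)^{b_J}<\infty$, and Borel--Cantelli furnishes $\Omega_2^*$.

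To finish, fix $t_0\in(0,1)$ and $\varepsilon>0$. By (\ref{eq23'}) there is $J_n\to\infty$ with $H_{J_n}(t_0)\to H(t_0)$; let $K_n$ be the good index guaranteed on $\Omega_2^*$ inside the block at scale $J_n$ containing $\lfloor2^{J_n}t_0\rfloor$, so that $|K_n/2^{J_n}-t_0|\le (b_{J_n}+1)2^{-J_n}$. The Lipschitz control (\ref{eq1})--(\ref{eq2}) then forces $H_{J_n,K_n}=H_{J_n}(K_n/2^{J_n})\to H(t_0)$, whence $H_{J_n,K_n}\le H(t_0)+\varepsilon$ and $|\Delta_{J_n,K_n}|\ge|a_{J_n,K_n}|\ge c_1 2^{-J_n(H(t_0)+\varepsilon)}$ for all large $n$, with $\rho_{J_n,K_n}=O(J_n2^{-J_n})$. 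Inserting this into the comparison inequality of the first paragraph contradicts $\alpha_X(t_0)>H(t_0)+\varepsilon$: for any admissible $\alpha>H(t_0)+\varepsilon$ the right-hand side decays like $2^{-J_n\alpha}$ up to a power of $J_n$, while the left-hand side stays at least $c_1 2^{-J_n(H(t_0)+\varepsilon)}$, which is impossible for large $n$. Hence $\alpha_X(t_0)\le H(t_0)+\varepsilon$ on $\Omega_2^*$ for every $t_0$; as $\varepsilon>0$ is arbitrary and $\Omega_2^*$ does not depend on it, letting $\varepsilon\downarrow0$ completes the proof.
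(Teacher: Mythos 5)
Your proof is correct, and it follows a genuinely different route from the paper's. The paper works with first-order increments $\Delta^1_{J,K}=X(d_{J,K+1})-X(d_{J,K})$ over windows of width $e_J=6[2^{J\delta}]$ and splits each of them into an ``innovation'' part $\tilde\Delta^1_{J,Le_J}$, whose wavelet indices are localized in the current window (so that these parts are independent across windows), and a ``history'' part $\check\Delta^1_{J,Le_J}$; it then needs a delicate variance upper bound for the history (Lemma~\ref{lemma 17}, the most technical step and the only place where the inequality $H(t)<\underline{H}+1/2$ of Assumption~\ref{ass1} is used), a variance lower bound for the innovation (Lemma~\ref{lem9}), an independence-based Borel--Cantelli argument (Lemma~\ref{lemma10}), and an almost-sure control of the history (Lemma~\ref{lemma13}), before concluding by the triangle inequality and a contradiction involving the auxiliary parameter $\delta$. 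You instead recycle the second-order increments $\Delta_{J,K}$ of (\ref{eq25}) from the lower-bound section and replace the entire near/far decomposition by conditioning: since $D_{J,J}^{K,k}=0$ for $k>K$ while $\left|D_{J,J}^{K,K}\right|=\left(H_{J,K}+\tfrac12\right)^{-1}\left|1-2^{3/2-H_{J,K}}\right|$ is bounded away from zero uniformly on $[\underline{H},\overline{H}]\subset(0,1)$, the scale-$J$ coefficients form a lower-triangular Gaussian system with diagonal entries of order $2^{-JH_{J,K}}$, and the anti-concentration bound $\inf_{\mu}{\mathbb P}(|\mu+Z|\geq 1)={\mathbb P}(|Z|\geq 1)>0$ (with $Z$ standard Gaussian), being uniform over the conditional mean, makes every contribution from other scales and from the past harmless \emph{without estimating it at all}. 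This buys you a substantially shorter proof (no analogues of Lemmas~\ref{lemma 17}, \ref{lem9} and~\ref{lemma13}), blocks of size $\Theta(J)$ instead of $2^{J\delta}$, hence the cleaner localization $|t_0-d_{J,K}|=O(J2^{-J})$ and a single exceptional-free event valid simultaneously for all $\varepsilon>0$ with no $\delta$-bookkeeping, and, notably, independence from the inequality in Assumption~\ref{ass1}: you only use the liminf defining $H(t)$ in (\ref{eq23'}) together with the growth condition (\ref{eq2}), so your argument establishes the proposition under weaker hypotheses. What the paper's longer route yields in exchange is explicit quantitative control of $\sigma(\tilde\Delta^1_{J,Le_J})$ and $\sigma(\check{\Delta}^1_{J,Le_J})$, information of independent interest but not needed for the statement itself.
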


\begin{proof}[Proof of Proposition~{\rm\ref{proposition 2}}]
    The proof of Proposition~{\rm \ref{proposition 2}} is inspired by the approach in the paper~{\rm\cite{AYACHE2020}}.

For all $J \in \mathbb{N}$ and $K \in \{0,\dots,2^J-1\},$ we set
\begin{equation}\label{eq63}
    \Delta^1_{J,K} \coloneqq X(d_{J,K+1})-X(d_{J,K}) = \sum_{j=0}^{+\infty} \sum_{\{j,k:\,d_{j,k}\leq d_{J,K+1}\}} a_{j,k} (J,K) \varepsilon_{j,k},
\end{equation}
where
\begin{equation}\label{eq64}
 \begin{aligned}
    a_{j,k} (J,K) &\coloneqq \int_{0}^{d_{J,K+1}} \left( (d_{J,K+1}-s)^{H_{j,k}-\frac{1}{2}}- (d_{J,K}-s)_{+}^{H_{j,k}-\frac{1}{2}} \right) h_{jk}(s)ds\\
    &\  = 2^{-jH_{j,k}} \left( h^{[H_{j,k}]} \left( 2^j d_{J,K+1} -k \right) -h^{[H_{j,k}]} \left( 2^jd_{J,K}-k \right)\right).
    \end{aligned}
\end{equation}
In the following, we represent $\Delta^1_{J,K}$ as a sum by splitting the double sum in~(\ref{eq63})  into non-overlapping sets of terms and analyzing the asymptotics of each summand separately as $J$ increases.

Let $\{e_{J}:\, J\geq 3\}$ be a non-decreasing sequence of integers greater or equal to 3, such that
\[\lim_{J\to +\infty} e_{J} = +\infty \quad \mbox{and}\quad e_{J} < 2^J.\]
For each $J \geq 3$ let us denote
\begin{equation} \label{eq65}
    {\mathcal L}_J \coloneqq \left\{ L \in \mathbb{N} : Le_{J}< 2^J\right\}.
\end{equation}

For all $j \in \mathbb{Z}_{+}$ and $L \in {\mathcal L}_{J},$ let us define
\begin{equation*}\label{eq66}
    \widetilde{\Lambda}_j(J,L) \coloneqq \left\{ k \in \{ 0, \dots, 2^j -1 \} : d_{J,(L-1)e_J+1} \leq d_{j,k} < d_{J,Le_J+1} \right\}
\end{equation*}
\begin{equation}\label{eq67}
    \check{\Lambda}_j(J,L) \coloneqq \left\{ k \in \{ 0, \dots, 2^j -1 \right\} :  d_{j,k} < d_{J,(L-1)e_J+1} \}.
\end{equation}

Then, we can introduce
\begin{equation}\label{eq68}
    \widetilde{\Delta}^1_{J,Le_J} \coloneqq \sum_{j=0}^{+\infty} \sum_{k \in \widetilde{\Lambda}_j(J,L)} a_{j,k}(J,Le_J) \varepsilon_{j,k}
\end{equation}
\begin{equation}\label{eq69}
    \check{\Delta}^1_{J,Le_J} \coloneqq \sum_{j=0}^{+\infty} \sum_{k \in \check{\Lambda}_j(J,L)} a_{j,k}(J,Le_J) \varepsilon_{j,k}.
\end{equation}


Let us consider the sequence $\{e_J\}$ of the form
\begin{equation}\label{eJ}
e_J: = 6[2^{J\delta}], \quad     \delta \in (0,1),
\end{equation}
where $[\cdot]$ denotes the integer part.

\begin{lemma}\label{lemma 17}
If $\{e_J\}$ satisfies~{\rm(\ref{eJ})}, then, for each $t_0 \in (0,1),$ there exists a constant $c(t_0) >0$ and a constant $\gamma>0,$ such that for all integers $J\geq 3$ and  $L \in {\mathcal L}_J$ satisfying
\begin{equation}\label{eq70'}
\left| t_0 -d_{J,Le_J} \right| \leq 2{Je_J} 2^{-J},
\end{equation}
it holds
 \begin{equation*}\label{eq70''}
     \sigma\left( \check{\Delta}^1_{J,Le_J} \right) \leq c(t_0) 2^{-J\left( H(t_0) +{\delta}\gamma \right)}.
 \end{equation*}
\end{lemma}

\begin{proof}[Proof of Lemma~{\rm \ref{lemma 17}}]
It follows from (\ref{eq64}), (\ref{eq67}), (\ref{eq69}), and Lemmas~{\rm \ref{lemma1}} and~{\rm \ref{lemma2}} that, for all~$J\geq 3$ and $L \in {\mathcal L}_J,$
\begin{align*}
    \sigma^2\left( \check{\Delta}^1_{J,Le_J}\right) &= \sum_{j=0}^{+\infty} \sum_{k \in \check{\Lambda}_{j}(J,L)} \left| a_{j,k} (J,Le_J) \right|^2 \\
    &= \sum_{j=0}^{+\infty} \sum_{k \in \check{\Lambda}_{j}(J,L)} 2^{-2jH_{j,k}} \left| h^{[H_{j,k}]} \left( 2^j d_{J,Le_J+1} -k \right) -h^{[H_{j,k}]} \left( 2^jd_{J,Le_J}-k \right) \right|^2.
\end{align*}

For each $j \in \mathbb{Z_{+}},$ let us define
\begin{equation}\label{eq71}
\bar{k}_j(J,L) \coloneqq \max \left\{ k \in \{0,\dots,2^j-1\}: d_{j,k}< d_{J,(L-1)e_J+1} \right\}.
\end{equation}

For each integer $k \leq \bar{k}_j(J,L)-2,$ the function $h^{[H_{j,k}]} (\cdot)$ belongs to the space $C^{\infty}([2^jd_{J,Le_J}-k,2^jd_{J,Le_J+1}-k]).$
    Thus, by applying  the mean value theorem three times, one gets that
    \begin{eqnarray}\label{eq71'}
    \left| h^{[H_{j,k}]} (2^j d_{J,Le_J+1}-k) -h^{[H_{j,k}]}(2^j d_{J,Le_J}-k) \right|^2 \nonumber \\ \leq C_1 2^{2(j-J)} \left( 1+(2^jd_{J,Le_J}-k-1) \right)^{2\overline{H}-5},
    \end{eqnarray}
where the constant $C_1$ does not depend on $J, L,j,$ and $k.$

The multiplier $2^{2(j-J)}$ in (\ref{eq71'}), appeared from the application of the mean value theorem: once on an interval of the length $2^{j-J},$ and twice on intervals, which lengths do not exceed 1. The power index $H_{j,k}$ was replaced by $\overline{H}$ without altering the inequality, as by (\ref{eq71})  it holds \[2^jd_{J, Le_J}-(k+1)>2^jd_{J,(L-1)e_J+1}-1-(k+1)>0.\]

    For each $j \in \mathbb{Z_{+}},$ let us define
    \begin{equation} \label{eq72}
        \check{\Lambda}_{j}^{2} (J,L) \coloneqq \left\{ k \in \check{\Lambda}_j(J,L) : d_{j,k+1}+ \dfrac{4}{(1+j)^2} < d_{J,(L-1)e_J+1}\right\}.
    \end{equation}
   Note that if $k \in \check{\Lambda}_{j}^{2} (J,L),$ then, since ${4}/{(1+j)^2}>2^{-j},$  it follows that  $k \leq \bar{k}_j(J,L)-2$  for all $j \in \mathbb{Z}_+.$

    By (\ref{eq71'}), (\ref{eq72}) and $d_{J,Le_J}-d_{J,(L-1)e_J+1}=(e_J-1)/{2^J},$  one obtains
   \begin{align*}
        \sum_{k \in \check{\Lambda}_{j}^{2} (J,L)}& \left| a_{j,k} (J,Le_J) \right|^2
        \leq C_1  \frac{2^{2j(1-\underline{H})}}{2^{2J}} \sum_{m=0}^{+\infty} \left( 1+ 2^j \left(\dfrac{e_J-1}{2^J}\right)+ \dfrac{4\cdot 2^j}{(1+j)^2} +m \right)^{2\overline{H}-5} \\
        &\leq C_1 2^{-2J} 2^{2j(1-\underline{H})} \int_{0}^{+\infty} \left( \dfrac{4\cdot 2^j}{(1+j)^2} +x \right)^{2\overline{H}-5} dx \\
        &\leq C_2 2^{-2J} 2^{2j(1-\underline{H})} \left( \dfrac{2^j}{(1+j)^2} \right)^{2\overline{H}-4} \leq C_2 2^{-2J} 2^{-2j(1+\underline{H}-\overline{H})} (1+j)^{8-4\overline{H}}.
    \end{align*}
Thus, as $\overline{H}-\underline{H}<1,$ it holds that
    \begin{equation}\label{eq73}
        \sum_{j=0}^{+\infty} \sum_{k \in \check{\Lambda}_{j}^{2} (J,L)} \left| a_{j,k} (J,Le_J)\right|^2 \leq C_2 2^{-2J} \sum_{j=0}^{+\infty} 2^{-2j(1+\underline{H}-\overline{H})} (1+j)^{8-4\overline{H}} \leq C_3 2^{-2J}.
\end{equation}
Thus, let us select $\gamma_1>0$ such that $H(t_0) +{\delta}\gamma_1\le 1.$

By estimating the difference of $h^{[H_{j,k}]} (\cdot)$ as in~(\ref{incr}) and applying~(\ref{eq64}), we deduce that for every $j_0 \in \mathbb{Z}_{+}$  and for any fixed $\alpha\in(0,1),$ there is a constant $C_4$ that does not depend on $j_0,$ such that
\begin{equation}\label{eq75}
    \sum_{j=j_0+1}^{J-[\log_{2}(e_J^\alpha)]} \sum_{k=\bar{k}_j(J,L)-1}^{\bar{k}_j(J,L)} \left| a_{j,k}(J,Le_J) \right|^2 \leq C_4 \sum_{j=j_0+1}^{J-[\log_2(e_J^\alpha)]} 2^{-2jH_{j,k}} 2^{2\left( (H_{j,k}+{1}/{2})  \wedge 1\right)(j-J)}.
\end{equation}

The application of (\ref{eq1}), (\ref{eq2}), (\ref{eJ}), (\ref{eq70'}) and (\ref{eq71})  for $j \leq J$ imply that, when $k$ is equal to $
\bar{k}_j(J,L)-1$ or $\bar{k}_j(J,L),$ it holds
\begin{align}
    j\left| H_{j,k} -H_j(t_0) \right| &\leq C_5 (1+j)^2 |t_0 -d_{j,k}|\leq C_5 (1+j)^2 \Big( |t_0-d_{J,Le_J}|  \nonumber\\
    & + |d_{J,Le_J} -d_{J,(L-1)e_J+1}| + |d_{J,(L-1)e_J+1}-d_{j,k}|\Big) \nonumber \\
    & \leq C_5 (1+j)^2 \left( Je_J2^{-J} + e_J 2^{-J} + 2^{-j+1}\right) \label{eq75'}\\
    & \leq C_6 (1+j)^2 2^{-j(1-2\delta)}  \leq C_7 \nonumber.
\end{align}
Therefore, by using (\ref{eq23'}) and (\ref{eJ}), the last sum in (\ref{eq75}) can be bounded by
\begin{align}
     & C_8 \sum_{j=j_0+1}^{J-[log_2(e_J^\alpha)]} 2^{-2j(H(t_0)-\varepsilon)} \cdot 2^{2((\underline{H}+{1}/{2})\wedge 1)(j-J)} \leq C_9 \frac {2^{2J\left( (\underline{H}+{1}/{2}) \wedge 1 -(H(t_0)-\varepsilon) \right)}}{2^{2J\left( (\underline{H}+\frac{1}{2})\wedge 1 \right)}}\nonumber \\
    & \quad \quad \times e_J^{2\alpha (H(t_0)-\varepsilon-(\underline{H}+\frac{1}{2})\wedge 1 )} \leq C_{10} 2^{-2J(H(t_0)-\varepsilon)} \cdot 2^{2J\alpha \delta (H(t_0)-\varepsilon-(\underline{H}+\frac{1}{2})\wedge 1 )}.\label{secest}
\end{align}
By (\ref{eq23'}) one can  select such $\gamma_2:=\alpha ((\underline{H}+\frac{1}{2})\wedge 1-H(t_0)+\varepsilon)-\varepsilon/\delta$  that it is positive for sufficiently small $\varepsilon.$

Now let us consider the set
\begin{equation*}
    \check{\Lambda}_{j}^{3}(J,L) \coloneqq \left\{ k \in  \check{\Lambda}_{j}(J,L) : k \leq \bar{k}_j (J,L)-2\ \mbox{and}\ d_{j,k+1} + \dfrac{4}{(1+j)^2} \geq d_{J,(L-1)e_J+1}\right\}.
\end{equation*}

When $k \in \check{\Lambda}_j^{3} (J,L),$ by applying the inequality $2^{-j} < 3(1+j)^{-2}$ and using (\ref{eJ})  when $j\leq J,$ similarly to (\ref{eq75'}), it follows that
\begin{align}
   & j \left| H_{j,k} - H_j(t_0) \right|  \leq   \ C_5 (1+j)^2 \Big( |t_0 -d_{J,Le_J}| + |d_{J,Le_J}-d_{J,(L-1)e_J+1}|   + |d_{J,(L-1)e_J+1}\nonumber \\
      &   -d_{j,k+1}| + |d_{j,k+1}-d_{j,k}|\Big)\leq C_5' (1+j)^2 \left( (J+1)e_J 2^{-J} + 7(1+j)^{-2} \right) \leq C_7^{'}. \label{eq76'}
    \end{align}

By  (\ref{eq45}), (\ref{eq71'}) and  (\ref{eq76'}), one obtains
\begin{align}
    \sum_{j=j_0+1}^{J-[\log_2(e_J^{\alpha})]}& \sum_{k \in \check{\Lambda}_j^{3}(J,L)} \left| a_{j,k}(J,Le_J) \right|^2  \leq C_{11} 2^{-2J} \sum_{j=j_0+1}^{J-[\log_2(e_J^{\alpha})]}  2^{2j \left( 1-H(t_0) + \varepsilon\right)} \sum_{m=0}^{+\infty} (1+m)^{2\overline{H}-5} \nonumber \\
    & \leq C_{12} 2^{-2J} \cdot 2^{2\left( J-\log_2(e_J^{\alpha}) \right)\left( 1+\varepsilon -H(t_0) \right)} = C_{12} 2^{-2J(H(t_0)-\varepsilon)} \cdot e_J^{-2\alpha(1+\varepsilon-H(t_0))} \nonumber \\
    & \leq C_{13} 2^{-2J(H(t_0)-\varepsilon)} \cdot 2^{-2J\alpha\delta(1+\varepsilon-H(t_0))} \label{eq78}.
\end{align}

Let us select such $\gamma_3:=\alpha (1+\varepsilon-H(t_0))-\varepsilon/\delta$  that it is positive for sufficiently small $\varepsilon.$

Let us now focus on the case where
$    J- [log_2(e_J^{\alpha})] < j \leq J^2.
$
For any such $j,$ one has
\begin{equation}\label{eq79}
    d_{J,Le_J}-d_{j,\bar{k}_j(J,L)+1} \geq \frac{1}{6} \cdot \frac{e_J}{2^J}.
\end{equation}

Indeed, it follows from (\ref{eq71}) that
\begin{equation*}
    0 \leq d_{j,\bar{k}_j(J,L)+1} -d_{J,(L-1)e_J+1} < 2^{-j}.
\end{equation*}

Thus, since $e_J \geq 3,$
\begin{align*}
   d_{J,Le_J} - d_{j,\bar{k}_j(J,L)+1} &= d_{J,Le_J} -d_{J,(L-1)e_J+1} +d_{J,(L-1)e_J+1} - d_{j,\bar{k}_j(J,L)+1} \\
   & > \dfrac{e_J-1}{2^J}-\frac{1}{2^j} \geq \dfrac{e_J-1}{2^J} - \dfrac{1}{2^{{J+1}-[log_2(e_J^\alpha)]}}  \geq \dfrac{e_J-1}{2^J}-\dfrac{e_J^\alpha}{2^{J+1}} \\
   & = \dfrac{e_J-2}{2^{J+1}} \geq \dfrac{e_J-2{e_J}/{3}}{2^{J+1}} = \frac{1}{6}\cdot \frac{e_J}{2^J}.
\end{align*}
 On the other hand, when $k \in \check{\underline{\Lambda}}_J^{3}(J,L) := \check{\Lambda}_J^{3}(J,L) \cup \{ \bar{k}_j(J,L)-1, \bar{k}_j(J,L)\},$ it follows from (\ref{eq75'}), (\ref{eq76'}) and $j\le J^2$, that
 \begin{equation}\label{eq80}
     j\left| H_{j,k}-H_j(t_0)\right| \leq C_5(1+j)^2 (J+1) e_J 2^{-J} + C_{14} \leq C_5 (1+J)^5 e_J 2^{-J} +C_{14} \leq C_{15}.
 \end{equation}

When $J-[log_2(e_J^\alpha)] \geq j_0,$ by using (\ref{eq9}),  (\ref{eq45}), (\ref{eq79}) and  (\ref{eq80}), one gets that
\begin{align}
    &\sum_{j=J-[log_2(e_J^\alpha)]+1}^{J^2} \sum_{k \in \check{\underline{\Lambda}}_J^{3}(J,L)} \left|a_{j,k} (J,Le_J) \right|^2 \leq C_{16} \sum_{j=J-[log_2(e_J^\alpha)]+1}^{J^2} 2^{-2j(H(t_0)-\varepsilon)} \nonumber\\
    &\times \sum_{k \in \check{\underline{\Lambda}}_J^{3}(J,L)} \left( \left| h^{[H_{j,k}]} (2^j d_{J,Le_J+1}-k)\right|^2 + \left| h^{[H_{j,k}]} (2^j d_{J,Le_J}-k) \right|^2 \right) \nonumber\\
    &\leq C_{17} \sum_{j=J-[log_2(e_J^\alpha)]+1}^{J^2} 2^{-2j(H(t_0)-\varepsilon)} \sum_{m=0}^{+\infty} \left( 2^j \frac{e_J}{6\cdot 2^J} +m \right)^{2\overline{H}-3} \nonumber\\
    &\leq C_{18} 2^{-2J(H(t_0)-\varepsilon)} \cdot e_J^{2\alpha (H(t_0)-\varepsilon)} \sum_{m=0}^{+\infty} \left( e_J^{1-\alpha}/3 +m \right)^{2\overline{H}-3} \nonumber \\
    & \leq C_{19} 2^{-2J(H(t_0)-\varepsilon)} 2^{2J\delta(\alpha (H(t_0)-\varepsilon)-(1-\overline{H})(1-\alpha))}\label{eq81}.
\end{align}

Note that the constant $C_{19}$ depends on $\alpha.$

To obtain a negative power in the second multiplier in (\ref{eq81}), the parameter $\alpha$ can be chosen such that $\alpha H(t_0)<(1-\alpha)(1-\overline{H}).$ For instance, it suffices to select $\alpha$ such that
$\alpha \overline{H}<(1-\alpha)(1-\overline{H}),$ which yields the condition $ \alpha < 1-\overline{H}.$

In this case we choose $\gamma_4:=\alpha (H(t_0)-\varepsilon)-(1-\overline{H})(1-\alpha)-\varepsilon/\delta$  that it is positive for sufficiently small $\varepsilon.$

Let us now consider the case of $j\geq J^2+1.$
By using (\ref{eq8}), (\ref{eq9}), (\ref{eq14}) and (\ref{eq67}) one obtains
\begin{align}
    \sum_{j=J^2+1}^{+\infty} \sum_{k \in \check{\Lambda}_j(J,L)} \left| a_{j,k}(J,L) \right|^2 &\leq C_{19} \sum_{j=J^2+1}^{+\infty} 2^{-2j\underline{H}} \sum_{m=0}^{+\infty} \left( 3+d_{J,Le_J} -d_{J,(L-1)e_J+1}+m\right)^{2\overline{H}-3}\nonumber \\
    &\hspace{-1cm}\leq C_{20} \sum_{j=J^2+1}^{+\infty} 2^{-2j\underline{H}} \sum_{m=0}^{+\infty}\left( 3+m\right)^{2\overline{H}-3} \leq C_{21} 2^{-2J^{2}\underline{H}}.\label{lastest}
   \end{align}
   In this case $\gamma_5>0$ can be selected to satisfy the inequality $H(t_0) +{\delta}\gamma_5\le J\underline{H}.$

By combining the upper bounds from equations (\ref{eq73}), (\ref{secest}), (\ref{eq78}), (\ref{eq81}), and (\ref{lastest}), and selecting the value of $\gamma:=\min_{i=1,...,5}{\gamma_i}>0,$  the proof of Lemma~\ref{lemma 17} is complete.
\end{proof}

\begin{lemma}\label{lem9}
    Let $t_0 \in (0,1)$ be arbitrary and $J\geq 3$ and $L \in {\mathcal L}_{J}$ be such that {\rm (\ref{eq70'})} holds. Then, the random variables $\widetilde{\Delta}^1_{J,Le_J},$ $L \in {\mathcal L}_J,$ are independent and
    \begin{equation*}\label{eq83}
        \sigma \left( \tilde\Delta^1_{J,Le_J} \right) \geq C\cdot 2^{-JH_{J-1}(t_0)},
    \end{equation*}
    where the constant $C>0$ does not depend on $J$ and $L.$
\end{lemma}

\begin{proof}[Proof of Lemma~{\rm\ref{lem9}}]
    The independence of the random variables $\widetilde{\Delta}^1_{J,Le_J},$ $L \in {\mathcal L}_J,$ follows from the independence of the random variables $\varepsilon_{j,k}$ and from the fact that the sets $\{ (j,k):\, j,k \in \mathbb{Z_{+}}, k \in   \widetilde{\Lambda}_j(J,L)\},$ $L \in {\mathcal L}_J,$ are disjoint.

    For all $J\geq 3$ and $L \in {\mathcal L}_J,$ $Le_J \in \widetilde{\Lambda}_J(J,L)$  as $d_{J,Le_J} < d_{J,Le_J+1}$ and $d_{J,Le_J-1}-d_{J,(L-1)e_J+1}=d_{J,e_J-2} >0$ because $e_J\geq 3.$

    Notice, that $d_{J-1,{Le_J}/{2}} = d_{J,Le_J}$ and by (\ref{eq4}) one gets \[h_{J-1,{Le_J}/{2}} = 2^{(J-1)/2}  \left({\mathbbm{1}}_{[{Le_J}/2^J,(Le_J+1)/2^J)}(s) -{\mathbbm{1}}_{[((Le_J+1)/2^J,(Le_J+2)/2^J   )}(s)\right).\] Therefore,
    it follows from (\ref{eq64}) and (\ref{eq68}) that for all $J\geq 3$ and $L \in {\mathcal L}_J$
   \begin{align}
        \sigma^2\left( \widetilde{\Delta}^1_{J,Le_J} \right) & = \sum_{j=0}^{+\infty}\sum_{k \in \widetilde{\Lambda}_j(J,L)} \left|a_{j,k}(J,Le_J)\right|^2  \geq \left|a_{J-1,{Le_J}/{2}} (J,Le_J) \right|^2\nonumber  \\
        & = \left| \int_{d_{J-1,{Le_J}/{2}}}^{d_{J-1,{Le_J}/{2}+1}} \left( d_{J,Le_J+1}-s \right)_{+}^{H_{J-1,{Le_J}/{2}}-{1}/{2}} h_{J-1,{Le_J}/{2}} (s) ds\right|^2\nonumber \\
        & = 2^{J-1} \left| \int_{d_{J,Le_J}}^{d_{J,Le_J+1}} \left( d_{J,Le_J+1}-s \right)^{H_{J-1,{Le_J}/{2}}-{1}/{2}}ds \right|^2\nonumber  \\
        & = \dfrac{2^{J-1}}{\left( H_{J-1,{Le_J}/{2}} +{1}/{2}\right)^2} \cdot 2^{-2J(H_{J-1,{Le_J}/{2}}+1/2) }  \geq \dfrac{2}{9} \cdot 2^{-2JH_{J-1,{Le_J}/{2}}}. \label{est1}
\end{align}

    It follows from (\ref{eq1}),  (\ref{eq2})  and (\ref{eq70'}) that there is a constant $C_2>0,$ which does not depend on $J$ and $L,$ such that for all $J\geq 3:$
    \begin{equation*}\label{eq84}
        J\left| H_{J-1,L{e_J}/{2}} -H_{J-1}(t_0) \right| \leq C_1 J^{2} \left| d_{J,Le_J}-t_0 \right| \leq C_1 J^3 e_J 2^{-J} \leq C_2.
    \end{equation*}
    Then, by applying the lower bound~{\rm(\ref{est1})}, one gets that
    \begin{align*}
         \sigma^2\left( \tilde\Delta^1_{J,Le_J} \right) & \geq C \cdot 2^{-2JH_{J-1}(t_0)-2J \left| H_{J-1,L{e_J}/{2}} -H_{J-1} (t_0) \right|} \\
         & \geq C \cdot 2^{-2C_2} 2^{-2JH_{J-1}(t_0)}.
    \\[-2.9\normalbaselineskip]\mathstrut
\end{align*}\qedhere\\
\end{proof}

It follows from the definition (\ref{eq65}), that for all $J \geq 3$ it holds
$    {\mathcal L}_J = \mathbb{N} \cap \left[1, {2^J}/{e_J}\right).$
For each $p \in \{1, \dots, [{2^J}/{Je_J}] -1\},$ let us define the next sets
\begin{equation}\label{eq85}
    {\mathcal L}_{J,p} := \mathbb{N} \cap \left[ (p-1)J,pJ \right],
\end{equation}
and
\begin{equation*}
    {\mathcal L}_{J,\left[ \frac{2^J}{Je_J} \right]} := \mathbb{N} \cap \left[ \left( \left[\frac{2^J}{Je_J}\right] -1\right)J, \frac{2^J}{e_J} \right].
\end{equation*}

By (\ref{eJ}), there exists an integer $J_0 \geq 3$ such that for all $J\geq J_0$ it holds $ [{2^J}/{Je_J}] \geq 2.$  For any $J\geq J_0$ and every $p \in \{ 1, \dots [{2^J}/{Je_J}] \}$ let us define
\begin{equation}\label{eq86}
    \mu_{J,p} \coloneqq \max_{L \in {\mathcal L}_{J,p}} \dfrac{|\tilde\Delta^1_{J,Le_J}|}{\sigma (\tilde\Delta^1_{J,Le_J})}.
\end{equation}

\begin{lemma}\label{lemma10}
There is a positive constant $C$ and an event of probability $1,$ denoted by~$\Omega_{2}^{**},$ such that on $\Omega_{2}^{**},$  it holds

\begin{equation*}
    \liminf_{J \rightarrow +\infty} \left\{ \inf_{1\leq p\leq [{2^J}/{Je_J}]} \mu_{J,p}\right\} \geq C.
\end{equation*}
    \end{lemma}

\begin{proof}[Proof of Lemma~{\rm\ref{lemma10}}]
 Observe, that by (\ref{eq85}), $\operatorname{card}({\mathcal L}_{J,p})  = J,$
where the notation $\operatorname{card}(\cdot)$ is used to denote the cardinality of a set.

Then, by using (\ref{eJ}), (\ref{eq86}) and the fact that $\dfrac{\tilde\Delta^1_{J,Le_J}}{\sigma (\tilde\Delta^1_{J,Le_J})},$ $L \in {\mathcal L}_J,$ are independent standard Gaussian random variables  for $J\geq 3,$ one obtains
 \[\sum_{J=3}^{+\infty} {\mathbb P} \left( \inf_{1\leq p\leq [{2^J}/{Je_J}]} \mu_{J,p}<C \right) \leq  \sum_{J=3}^{+\infty} \sum_{p=1}^{[{2^J}/{Je_J}]} {\mathbb P} \left( \max_{L \in {\mathcal L}_{J,p}} \dfrac{|\tilde\Delta^1_{J,Le_J}|}{\sigma (\tilde\Delta^1_{J,Le_J})}<C \right)
    \]
    \[=\sum_{J=3}^{+\infty} \sum_{p=1}^{[{2^J}/{Je_J}]} \prod_{L \in {\mathcal L}_{J,p}}{\mathbb P} \left( \dfrac{|\tilde\Delta^1_{J,Le_J}|}{\sigma (\tilde\Delta^1_{J,Le_J})}<C \right)=\sum_{J=3}^{+\infty} \left[\frac{2^J}{Je_J}\right] (\Phi(C))^{\operatorname{card}({\mathcal L}_{J,p})}\]
    \begin{equation}\label{sumC}\leq \sum_{J=3}^{+\infty} 2^{J(1-\delta+\log_2(\Phi(C))},\end{equation}
    where $Z\sim N(0,1)$ and $\Phi(C):={\mathbb P} \left( Z<C \right).$

    By choosing $C>0$ such that  $\Phi(C)<2^{-1+\delta}$ we conclude that the series~(\ref{sumC}) is convergent. Hence, the statement of the lemma follows directly from the Borel-Cantelli lemma.
\end{proof}

\begin{lemma}\label{lem11}
    For all $\omega \in \Omega_{2}^{**}$ and $t_0 \in (0,1),$ there is $J_1 (t_0,\omega) \geq J_0$ such that for all $J \geq J_1(t_0,\omega)$ there exists $L_J(t_0,\omega) \in {\mathcal L}_{J}$ for which {\rm(\ref{eq70'})} holds, i.e.
\begin{equation}\label{eq87}
    \left| t_0-d_{J,L_J(t_0,\omega)e_J} \right| \leq 2Je_J2^{-J}
\end{equation}
    and
    \begin{equation*}\label{eq88}
        \left| \tilde\Delta^1_{J,L_J(t_0,\omega)e_J} (\omega) \right| \geq C 2^{-JH_{J-1}(t_0)},
    \end{equation*}
    where $C$ is a positive constant that does not depend on $J$ and $L_J(t_0,\omega).$
\end{lemma}

\begin{proof}[Proof of Lemma~{\rm\ref{lem11}}]
    Notice, that for each $t_0 \in (0,1),$ there exists a non-random integer number $J_2(t_0),$ such that for all $J\geq J_2(t_0),$ it holds

    \begin{equation}\label{eq89}
        1 \leq p_{J}(t_0) \leq \left[ \frac{2^J}{Je_J}\right]-2 ,
    \end{equation}
    where $p_J(t_0) \coloneqq \left[ {2^Jt_0}/{Je_J} \right].$

     On the other hand, it follows from Lemma~{\rm\ref{lemma10}} that there is $J_{3} (\omega),$ $\omega\in\Omega_{2}^{**},$ such that for all $J \geq J_{3}(\omega):$
     \begin{equation}\label{eq91}
         \inf_{1\leq p\leq \left[{2^J}/{Je_J}\right]} \mu_{J,p} \geq C>0.
     \end{equation}

     Let us set $J_1(t_0,\omega) \coloneqq \max ( J_{2} (t_0), J_{3}(\omega) ).$

By (\ref{eq89}) and (\ref{eq91}), for all $J \geq J_{1} (t_0,\omega),$ it holds
\begin{equation}\label{eq92}
    \mu_{J,p_J(t_0)+1} \geq  C>0.
\end{equation}

     Then, (\ref{eq85}), (\ref{eq86}) and (\ref{eq92})  imply that there exists an integer

     \begin{equation}\label{eq93}
         L_J(t_0,\omega) \in \left[ p_J(t_0)J, (p_J(t_0)+1)J \right],
     \end{equation}
such that, by Lemma~\ref{lem9},
\begin{align*}
    \left| \tilde\Lambda^1_{J,L_J(t_0,\omega)e_J}  \right| &\geq C \sigma \left( \tilde\Delta^1_{J,L_J(t_0,\omega)e_J} \right)  \geq C 2^{-JH_{J-1}(t_0)}, \label{eq94}
\end{align*}
 which provides the second inequality in the statement of the lemma.

      Next, notice that by the choice of $p_J(t_0):$
      \begin{equation}\label{eq95}
          0 \leq t_0 -d_{J,p_J(t_0) Je_J} < Je_J 2^{-J},
      \end{equation}
      and (\ref{eq93}) implies that
            \begin{equation}\label{eq96}
          0 \leq d_{J,L_J(t_0,\omega) e_J} -d_{J,p_J(t_0) Je_J} \leq J e_J 2^{-J}.
      \end{equation}

      Combining the inequalities (\ref{eq95}) and (\ref{eq96}) one obtains (\ref{eq87}), which completes the proof of Lemma~\ref{lem11}.
\end{proof}

\begin{remark}\label{remark 22}
Using the notations from Lemma~{\rm\ref{lem11}}, since it holds that $H(t_0) = \liminf_{J \rightarrow +\infty} H_{J} (t_0),$ there exists a deterministic increasing sequence of positive integer numbers, $\{J_m\}_{m \in \mathbb{N}} \subset \mathbb{N},$ such that
for all sufficiently large $m \in \mathbb{N}$
\begin{equation*}
    \left| \tilde{\Delta}^1_{J_m, L_{J_m} (t_0,\omega)e_{J_m}} \right| \geq C 2^{-J_m(H(t_0)+{\delta}\gamma/{4})}.
\end{equation*}
\end{remark}
\begin{lemma}\label{lemma13}
There exists an event of probability $1,$ denoted by $\Omega_2^{***},$ such that for each $t_0 \in (0,1),$ there is a finite random variable $\zeta(t_0),$ so that on $\Omega_2^{***}$  it holds
    \begin{equation*}\label{eq97}
        \left| \check{\Delta}^1_{J,Le_J} \right| \leq \zeta(t_0) 2^{-J(H(t_0)+{3}\delta\gamma/2)}
    \end{equation*}
    for all integers $J \geq 3$ and $L \in {\mathcal L}_{J}$ satisfying {\rm(\ref{eq70'})}.
\end{lemma}

\begin{proof}[Proof of Lemma~{\rm\ref{lemma13}}]
Observe, that by (\ref{eJ}), there exists a constant $C>0$ such that \[\operatorname{card}({\mathcal L}_{J})  = \operatorname{card}\left(\left\{ L \in \mathbb{N} : L< 2^J/e_{J}\right\}\right)\le C\cdot 2^{J(1-\delta)}.\]

Then, by using the fact that $\dfrac{ \check{\Delta}^1_{J,Le_J} }{\sigma (\check{\Delta}^1_{J,Le_J})},$  $J\geq 3,$ $L \in {\mathcal L}_J,$ are standard Gaussian random variables and their tail probability estimate given by (\ref{normtail}) one obtains
 \[
        \sum_{J=3}^{+\infty} {\mathbb P} \left( \max_{L \in {\mathcal L}_{J}} \dfrac{\left| \check{\Delta}^1_{J,Le_J} \right|}{\sigma (\check{\Delta}^1_{J,Le_J})} \geq  \sqrt{2\ln(2)J}\right) \le  \sum_{J=3}^{+\infty} \sum_{L \in {\mathcal L}_{J}} {\mathbb P} \left( \dfrac{\left| \check{\Delta}^1_{J,Le_J} \right|}{\sigma (\check{\Delta}^1_{J,Le_J})} \geq  \sqrt{2\ln(2)J}\right)
    \]
    \[= \sum_{J=3}^{+\infty} \operatorname{card}({\mathcal L}_{J}) {\mathbb P} \left( |Z| \geq \sqrt{2\ln(2)J}\right)<   C\sum_{J=3}^{+\infty} 2^{J(1-\delta)}  \frac{\exp(-\ln(2)J)}{\sqrt{J}} \]
    \[\le C\sum_{J=3}^{+\infty} 2^{-J\delta} <+ \infty.\]
Hence, by the Borel-Cantelli lemma and Lemma~\ref{lemma 17}, the next inequality holds  on $\Omega_2^{***}$
    \begin{equation*}
        \left| \check{\Delta}^1_{J,Le_J} \right| \leq \zeta(t_0) 2^{-J(H(t_0)+\delta\gamma)}\sqrt{J}.
    \end{equation*}

Finally, the statement of Lemma~\ref{lemma13} is established, as there exists a positive constant $C$ such that
\begin{equation}\label{sqJ}
\sqrt{J}\leq C\cdot 2^{J\delta\gamma/2}
\end{equation}
for all $J\ge 3.$
\end{proof}

\begin{remark}
    Let the event $\Omega_2^{*}$ be defined as $\Omega_2^{*} := \Omega_2^{**} \cap \Omega_3^{***}.$ Then, its probability is~$1,$ and since for all $J \geq 3$ and $L \in {\mathcal L}_J$ it holds
    \begin{equation*}
        \Delta^1_{J,Le_J} = \tilde{\Delta}^1_{J,Le_J} + \check{\Delta}^1_{J,Le_J},
    \end{equation*}
by Remark~{\rm\ref{remark 22}}, Lemma~{\rm\ref{lemma13}} and the triangle inequality, for each $t_0 \in (0,1),$ one has on $\Omega_2^{*}$ that

 \begin{equation}\label{eq98}
     \limsup_{J \rightarrow +\infty} \left\{ 2^{J(H(t_0)+ \delta\gamma/{4})} \max_{L \in {\mathcal L}_J (t_0)} |\Delta^1_{J,Le_J}| \right\} \geq C >0,
 \end{equation}
where  \begin{equation}\label{eq99}
     {\mathcal L}_J(t_0) = \left\{ L \in {\mathcal L}_J: |t_0 - d_{J,Le_J}| \leq 2J e_J 2^{-J} \right\}.
 \end{equation}
\end{remark}

 To complete the proof of Proposition~\ref{proposition 2},  let us use Definition~\ref{def1} and show that on~$\Omega_2^{*},$ for each $t_0 \in (0,1),$

  \begin{equation*}\label{eq100}
      \alpha_{X} (t_0) \leq H(t_0) + 2 \delta.
  \end{equation*}

By contradiction, suppose that there exists $\bar{\omega} \in \Omega_2^{*}$ and $\bar{t}_0(\bar{\omega}) \in (0,1),$ such that
 \begin{equation*}
     \limsup_{h \rightarrow +\infty} \left\{ |h|^{-H(\bar{t}_0(\bar{\omega}))-2\delta} \left| X(\bar{t}_0 (\bar{\omega})+h) - X (\bar{t}_0 (\bar{\omega}))\right| \right\} =0.
 \end{equation*}

 Then, by applying the upper bound on $h=|\bar{t}_0(\bar{\omega}) - d_{J,Le_J}|,$ it follows from the equation (\ref{eq99}) that the following limit is also vanishing
 \begin{equation*}
     \limsup_{J \rightarrow +\infty} \left\{ \left( \dfrac{2^J}{Je_J}\right)^{H(\bar{t}_0 (\bar{\omega}))+2\delta} \max_{L \in {\mathcal L}_J(\bar{t}_0 (\bar{\omega}))} \left| \Delta^1_{J,Le_J} (\bar{\omega}) \right| \right\} =0,
 \end{equation*}
 and consequently, by~(\ref{eJ}) and~(\ref{sqJ}), that
 \begin{equation}\label{eq101}
     \limsup_{J \rightarrow +\infty} \left\{ 2^{J(1-\frac{5}{4}\delta) (H(\bar{t}_0(\bar{\omega}))+ 2\delta)} \max_{L \in {\mathcal L}_J(\bar{t}_0 (\bar{\omega}))} \left| \Delta^1_{J,Le_J} (\bar{\omega}) \right|\right\}  =0.
 \end{equation}

If $\delta<\frac{1}{2}\left(1-\frac{3\gamma}{4}\right)$ is selected, then, for any  $H(\bar{t}_0(\bar{\omega})\in[0,1],$ it holds
\begin{equation*}
    \left( 1-\delta\left(1+\frac{\gamma}{2} \right) \right)\left( H(\bar{t}_0(\bar{\omega}))+ 2\delta \right) > H(\bar{t}_0(\bar{\omega})) +\frac{\delta\gamma}{4}.
\end{equation*}
 and, therefore, (\ref{eq101}) contradicts ({\rm \ref{eq98}}). It completes the proof of Proposition~\ref{proposition 2}.
\end{proof}

Now, by combining Propositions~\ref{prop1} and~\ref{proposition 2}, we obtain the following result, which demonstrates that the GHBMP has the required H\"older exponent.
\begin{theorem}
There is a universal event of probability $1,$ denoted by $\Omega^{*},$ such that on~$\Omega^{*}$,  it holds
\[\alpha_{X}(t)= H(t)\quad \mbox{for all}\quad t\in (0,1).\]
\end{theorem}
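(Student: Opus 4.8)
The plan is to obtain the result as an immediate consequence of the two one-sided bounds already established in Propositions~\ref{prop1} and~\ref{proposition 2}. The crucial observation is that each of these propositions furnishes a \emph{single} event of full probability on which the relevant inequality holds \emph{simultaneously for all} $t \in (0,1)$; this uniformity in $t$ is precisely what permits the two statements to be merged without any further probabilistic work (in particular, without incurring an uncountable union of exceptional null sets).

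First I would set $\Omega^* := \Omega_1^* \cap \Omega_2^*$, where $\Omega_1^*$ is the universal event supplied by Proposition~\ref{prop1} and $\Omega_2^*$ is the one supplied by Proposition~\ref{proposition 2}. Since $\mathbb{P}(\Omega_1^*) = \mathbb{P}(\Omega_2^*) = 1$, the complement satisfies $\mathbb{P}\bigl((\Omega^*)^c\bigr) \leq \mathbb{P}\bigl((\Omega_1^*)^c\bigr) + \mathbb{P}\bigl((\Omega_2^*)^c\bigr) = 0$, so that $\mathbb{P}(\Omega^*) = 1$ and $\Omega^*$ is again a universal event of probability $1$.

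Next, I would fix an arbitrary $\omega \in \Omega^*$ and an arbitrary $t \in (0,1)$. Because $\omega \in \Omega_1^*$, Proposition~\ref{prop1} gives $\alpha_X(t) \geq H(t)$, and because $\omega \in \Omega_2^*$, Proposition~\ref{proposition 2} gives $\alpha_X(t) \leq H(t)$. Combining these two inequalities yields $\alpha_X(t) = H(t)$. As $\omega \in \Omega^*$ and $t \in (0,1)$ were arbitrary, the identity $\alpha_X(t) = H(t)$ holds on $\Omega^*$ for all $t \in (0,1)$, which is exactly the assertion of the theorem.

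As for the main obstacle: there is essentially none remaining at this stage, since all the analytical difficulty has already been absorbed into the proofs of the two propositions — the delicate estimates of the Haar-coefficient differences $D_{J,j}^{K,k}$, the Borel--Cantelli arguments underlying Lemmas~\ref{lemma 9}, \ref{lemma10} and~\ref{lemma13}, and the appeal to Proposition~4 of~\cite{daoudi1998construction}. The only point requiring genuine care is the verification, already built into the statements of Propositions~\ref{prop1} and~\ref{proposition 2}, that each conclusion holds on a \emph{fixed} probability-one event valid for every $t \in (0,1)$ rather than on a $t$-dependent null set; granted this, a plain intersection of the two events suffices and the theorem follows with no additional effort.
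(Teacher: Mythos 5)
Your proposal is correct and coincides with the paper's own argument: the theorem is stated there as an immediate consequence of combining Propositions~\ref{prop1} and~\ref{proposition 2}, which is exactly your intersection $\Omega^* = \Omega_1^* \cap \Omega_2^*$ of the two universal probability-one events. Your additional remark that the uniformity in $t$ built into both propositions is what makes the plain intersection suffice (avoiding an uncountable union of null sets) is precisely the right observation.
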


\section{Simulation studies}\label{sec5}

This section presents simulation studies to validate the obtained theoretical results. The GHBMP was simulated using four different Hurst functions. Subsequently, the Hurst functions were estimated using the realizations and compared with the theoretical Hurst functions to confirm the accuracy of the model. Notice that some of the used theoretical Hurst functions did not satisfy Assumption~\ref{ass1}. However, simulation studies still demonstrated that the resulting process possessed the required properties. This suggests that the approach is applicable in more general cases.

To simulate the GHBMP the truncated version of the formula (\rm{\ref{eq6}}) was used, where the first summation was considered up to the level $J:$
\[
    X_J(t) \coloneqq \sum_{j=0}^{J}  \sum_{k=0}^{2^{j}-1}
     \left(\int_{0}^{1} (t-s)_{+}^{H_{j}(k/{2^j})-{1}/{2}} h_{j,k}(s)ds \right)\varepsilon_{j,k}.
\]

The following estimation method of the Hurst function given in publication \rm{\cite{ayache2022uniformly}}  was applied
\begin{equation*}
    \hat{H}_N^{Q} (I_{N,n}) \coloneqq \min \left\{ \max \left \{ \log_{Q^2} \left( \dfrac{V_{N}(I_{N,n})}{V_{QN}(I_{N,n})} \right) ,0 \right \} ,1 \right\},
\end{equation*}
where $\{I_{N,n}\}$ is a finite sequence of compact subintervals of $[0,1]$ and $Q \geq 2$ is a fixed integer. $V_{N}(I_{N,n})$ denotes the generalized quadratic variation of the multifractional process $X$ on $I_{N,n}$ that is defined as
$$ V_{N} (I_{N,n}) \coloneqq |\nu_{N} (I_{N,n})|^{-1 } \sum_{k \in \nu_{N} (I_{N,n})} |d_{N,k}|^{2},$$
where $|\nu_{N} (I)|$ denotes the cardinality of the set
$\nu_{N} (I) \coloneqq \left\{ k \in \{ 0,\dots, N-L\}:\right.$ $\left. k/N \in I\right\}$ and
$$d_{N,k} = \sum_{l=0}^{L} a_{l} X\left(\frac{k+l}{N}\right)$$
are the generalized increments of the multifractional process $X.$
Here, $0\leq k \leq N-L$, $N\ge L$, where $L\geq 2$ is an arbitrary fixed integer and $\{a_{l}\}$ are coefficients defined for $l \in \{0,\dots,L\}$ by
$$a_{l} \coloneqq (-1)^{L-l} \binom{L}{l} \coloneqq (-1)^{L-l} \dfrac{L!}{l!(L-l)!}.$$

In each of the following four cases, the multifractional processes were simulated using $J=20$ and the equally spaced grid of $(2^{18}+1)=262145$ time points in the time interval $[0,1]$. To estimate the Hurst function, $Q=2$ and $L=2$ were chosen and the partition $\{I_{N,n}\}$ of the time interval $[0,1]$ into $N=100$ subintervals was used.

First, the GHBMP was simulated for a constant Hurst function of $H(t)\equiv 0.5$ as depicted in Figure~\ref{fig1}. This case corresponds to the standard Brownian motion and the roughness of realisations of this process should not change on the interval $[0,1]$. Then, the processes were simulated for two Hurst functions, $H(t)=0.2+0.45t$ and $H(t)=0.5-0.4\sin(6\pi t).$ In all these three cases, $H_j(t) =H(t).$ In the second case, it is anticipated that the roughness of the process decreases. In the third case, the roughness is expected to exhibit oscillating behaviour. This aligns with the plots depicted in Figures~\ref{fig1}, \ref{fig2} and~\ref{fig3}, respectively.

The Hurst functions employed in the simulations are depicted in blue, while the estimated Hurst functions are represented in red. Due to the randomness and the small intervals used, the estimated Hurst function exhibits rapid fluctuations around the true one. Consequently, the LOESS method was employed to smooth the estimated Hurst function and reveal its general pattern, which is illustrated in green. According to the obtained plots, the true Hurst function and the smoothed Hurst function approximately coincide in each of the three considered cases. This close alignment between the theoretical and empirical Hurst functions validates the proposed methodology.

\begin{figure}[!hb]
\centering
\begin{subfigure}[c]{0.4\textwidth}
\centering
    \includegraphics[trim=0 0 4cm 0, clip, width=\textwidth, height= 1.2\textwidth]{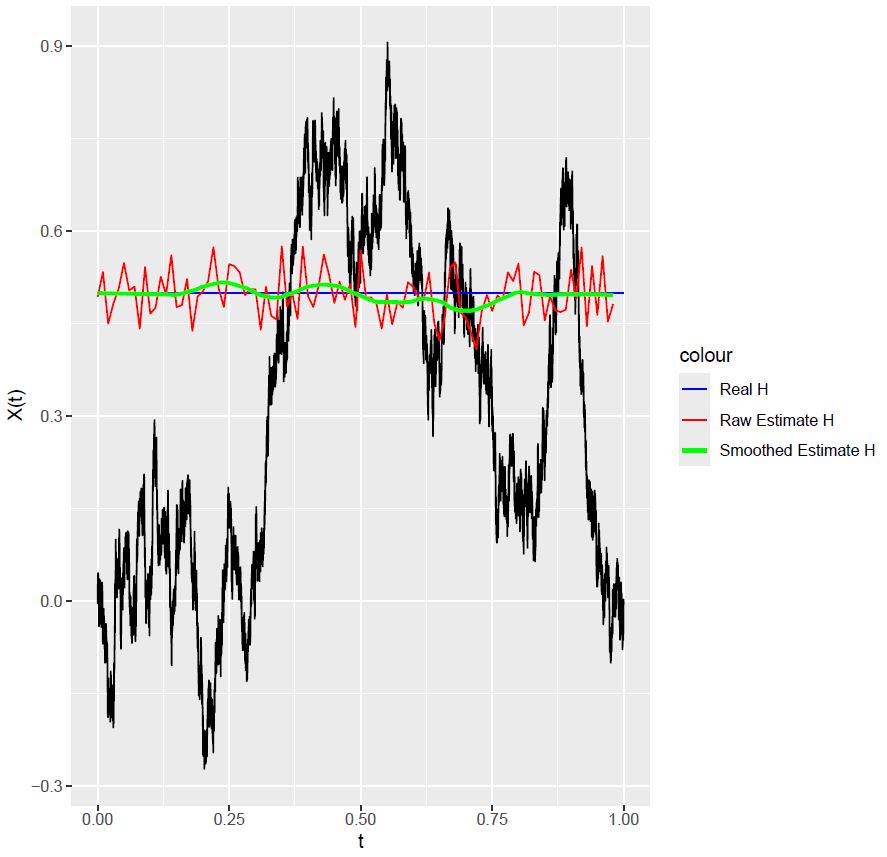}
    \caption{Case of $H\equiv 0.5$}
    \label{fig1}
\end{subfigure}\hspace{4mm}
\begin{subfigure}[c]{0.54\textwidth}
\centering
    \includegraphics[trim=0 0 0 5mm, clip,width=\textwidth, height= 0.885\textwidth]{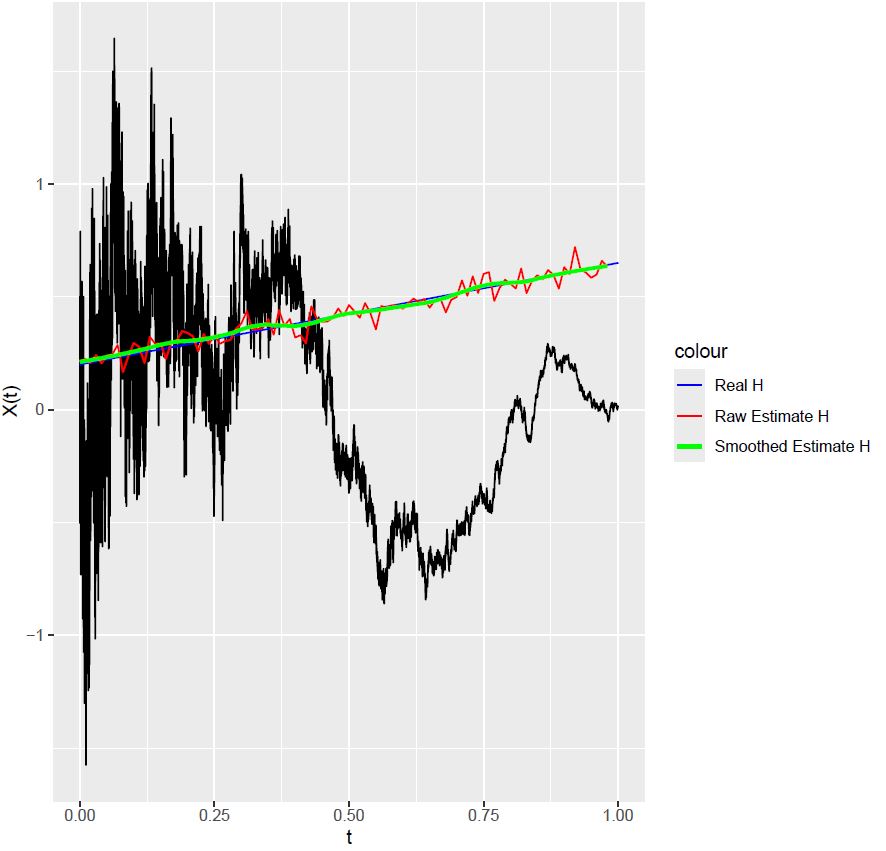}
    \caption{Case of  $H(t)=0.2+0.45t$}
    \label{fig2}
\end{subfigure}

\begin{subfigure}[c]{0.4\textwidth}
\centering
    \includegraphics[trim=0 0 4cm 0, clip, width=\textwidth, height= 1.21\textwidth]{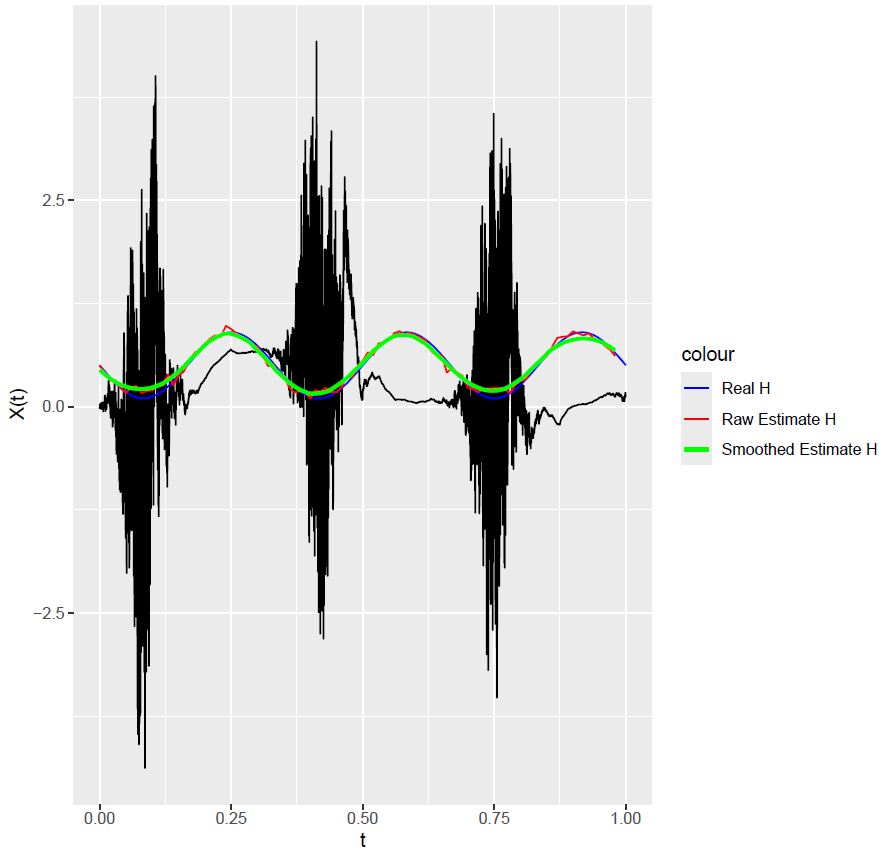}
    \caption{Case of $H(t)=0.5-0.4\sin(6\pi t)$}
    \label{fig3}
\end{subfigure}\hspace{4mm}
\begin{subfigure}[c]{0.54\textwidth}
\centering
    \includegraphics[trim=0 0 0 5mm, clip,width=\textwidth, height= 0.885\textwidth]{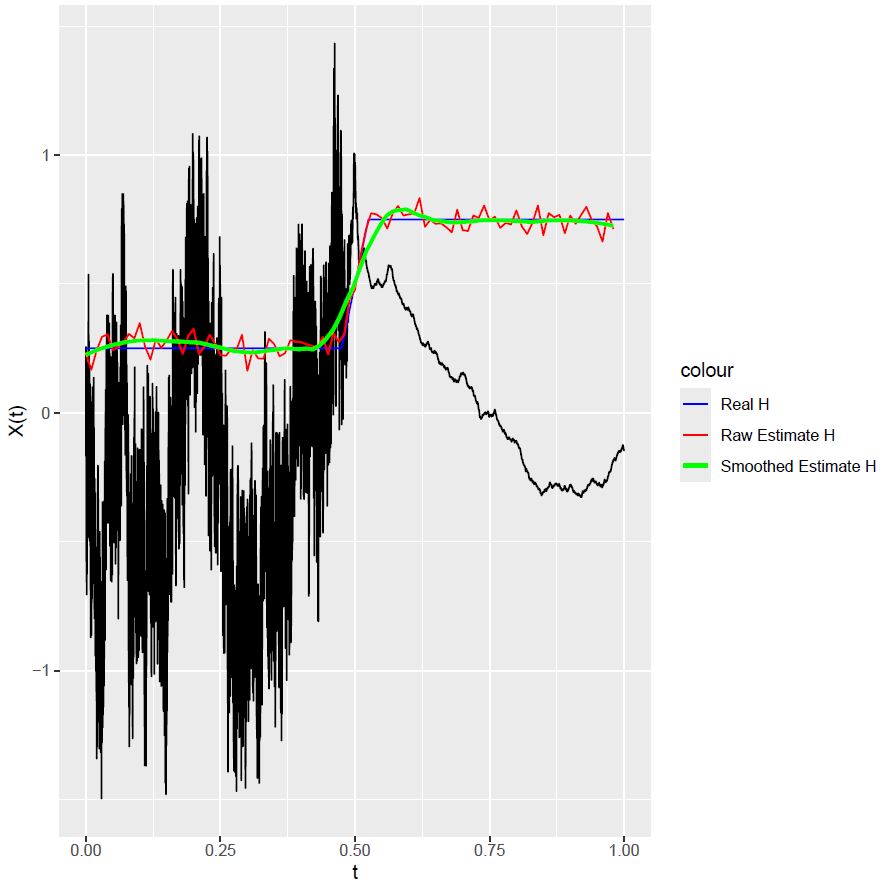}
    \caption{Case of $H(t)$ given by ({\rm \ref{eq102}})}
    \label{fig4}
\end{subfigure}
\caption{Realisations of GHBMP and corresponding Hurst functions}
\end{figure}

Furthermore, GHBMP was simulated using the following sequence of functions
\begin{equation*}
    H_{j}(t) =
    \begin{cases}
        {1}/{4}, & \text{if } t \in \left[0,{1}/{2}-{1}/({2j})\right]; \\
        {jt}/{2} + \left({1}/{2} -{j}/{4} \right), & \text{if } t \in \left[{1}/{2}-{1}/({2j}),{1}/{2}+{1}/({2j})\right]; \\
        {3}/{4}, & \text{if } t \in \left[{1}/{2}+{1}/({2j}),1\right],
    \end{cases}
\end{equation*}
that converges to the Hurst function with a discontinuity
\begin{equation}\label{eq102}
    \lim_{j \to \infty} H_{j}(t) = H(t) =
     \begin{cases}
        {1}/{4}, & \text{if } t \in \left[0,{1}/{2}\right]; \\
        {3}/{4}, & \text{if } t \in \left[{1}/{2},1\right].
    \end{cases}
\end{equation}

The simulation for this Hurst function was performed for $J=20$. Likewise in the previous cases, the Hurst function was estimated and the smoothed estimate was obtained and is depicted in Figure~\ref{fig4}. When $J$ increases, the smoothed Hurst function can be expected to approach the limit~(\ref{eq102}).  As in the previous three instances, the actual and smoothed Hurst functions exhibit a very close proximity in this case.

To analyze the deviations of the estimates from the theoretical functions used in simulations, we computed absolute differences between the theoretical and estimated Hurst functions across 100 sub-intervals for the Hurst function $H(t)=0.5-0.4\sin(6\pi t)$. This was repeated for 30 realizations for different $J$ and time intervals. The time intervals were defined by sequences from $0$ to $1$ with the step ${1}/{2}^n,$ where $n$ increased with $J$  for accurate wavelet transform computations. Figure~\ref{fig6} depicts the boxplots of the absolute differences and Figure~\ref{fig7} visualizes the maximum absolute differences of the total 30 realizations for $J$ from 14 to 20 and $n$ from 10 to 15. Further, the realizations were averaged, and the boxplots were obtained for the different $n$ and~$J$ parameters (see Figure~\ref{fig5}). Moreover, the average, maximum, and mean squared absolute difference for these averaged values are presented in Table~\ref{table:1}.
\begin{figure}[htb!]
    \centering
\begin{subfigure}[c]{0.32\textwidth}
    \includegraphics[trim=0mm 6mm 0 0, clip, width=\textwidth, height= 1.4\textwidth]{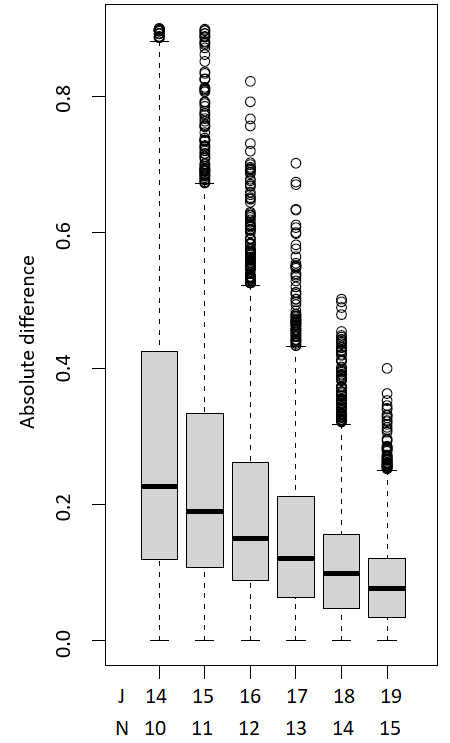}
    \caption{total  differences}
    \label{fig6}
\end{subfigure}
\begin{subfigure}[c]{0.32\textwidth}
    \includegraphics[trim=3.9mm 7mm 0 0, clip, width=\textwidth, height= 1.4\textwidth]{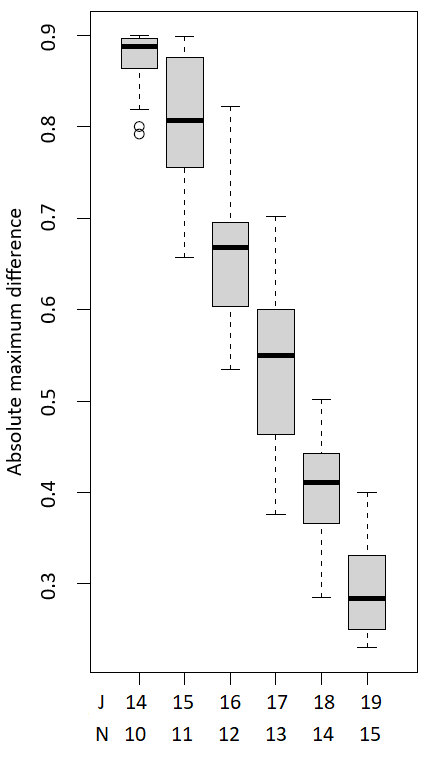}
    \caption{maximum differences}
    \label{fig7}
\end{subfigure}
\begin{subfigure}[c]{0.32\textwidth}
    \includegraphics[trim=11mm 10mm 0 0, clip, width=\textwidth, height= 1.4\textwidth]{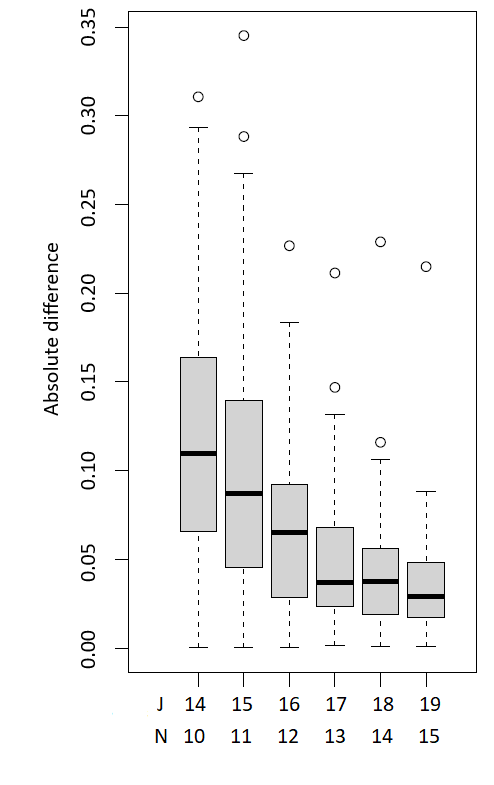}
    \caption{averaged differences}
    \label{fig5}
\end{subfigure}
\caption{Boxplots of differences of estimates and $H(t)=0.5-0.4\sin(6\pi t)$}
\end{figure}
\begin{table}[!htb]
\begin{center}
    \begin{tabular}{c c c c c}
 \hline
$J$ & $n$ & Average & Maximum & Mean squared \\
  & &difference & difference & difference \\
 \hline
 14 & 10 & 0.1206 & 0.3107 & 0.0198 \\
 \hline
 15 & 11 & 0.0979 & 0.3452 & 0.0143 \\
 \hline
 16  & 12 & 0.0664 & 0.2267 & 0.0064 \\
 \hline
 17 & 13 & 0.0473 & 0.2114 & 0.0035 \\
 \hline
 18 & 14 & 0.0411 & 0.2289 & 0.0027 \\
 \hline
 19 & 15 & 0.0347 & 0.2149 & 0.0020 \\
 \hline
\end{tabular}
\end{center}
\caption{Differences between estimates and $H(t)=0.5-0.4\sin(6\pi t)$ for increasing $J$}
\label{table:1}
\end{table}

The numeric values presented in the table and plots confirm that the accuracy of approximations of $H(t)$ improves with increases in both $J$ and the temporal resolution. Simulation studies suggest that in practical applications, utilising values of $J$ within the range of 15-17 will yield highly accurate modelling of desirable Hurst functions.

\section{Conclusion and future studies}\label{sec6}

Gaussian Haar-based multifractional processes were constructed using the Haar wavelet approach to model data with multifractal behaviour. The Haar wavelets were used due to their simplicity and computational efficiency, leading to high-speed computations. The suggested class of GHBMP can efficiently model sharp changes in the roughness of processes. Its theoretical properties were analyzed. Simulation studies were conducted for several Hurst functions. Based on the simulations and estimations performed, it can be concluded that the process possesses the required properties. Furthermore, the simulations confirm that the proposed approach works effectively, even in cases where the Hurst function exhibits discontinuities

To advance the proposed approach, future studies can be focused on:
\begin{itemize}
    \item developing similar models for processes with random Hurst functions;
    \item modifying the approach for the multidimensional case of random fields;
    \item converting the developed code into an R package, making it accessible to other researchers.
\end{itemize}

\section*{Acknowledgments}

Funding: This research was supported under the Australian Research Council's Discovery Projects funding scheme (project number  DP220101680). A.Olenko is grateful to Laboratoire d'Excellence, Centre Europ\'{e}en pour les Math\'{e}matiques, la Physique et leurs interactions (CEMPI, ANR-11-LABX-0007-01), Laboratoire de Math\'{e}matiques Paul Painlev\'{e}, France, for support and providing him with the opportunity to conduct research at the Universit\'{e} de Lille for a month. He was also partially supported by La Trobe University's SCEMS CaRE and Beyond grant. A.Ayache expresses his gratitude to La Trobe University for its support and for the opportunity to conduct joint research during his four-week visit in 2023.

Computations in this research were done using the Linux computational cluster Gadi of the National Computational Infrastructure (NCI), Australia.

\section*{Data availability}
The simulation studies were performed using the software R version 4.3.1. The R code is freely available in the folder ``Research materials'' from the website \url{https://sites.google.com/site/olenkoandriy/}.

\bibliographystyle{abbrv}
\bibliography{references}

\end{document}